\newcommand{\argmin}{\operatorname{argmin}}
\def\leq{\leqslant}
\def\geq{\geqslant}
\numberwithin{equation}{section}
\newtheoremstyle{thmlemcorr}{10pt}{10pt}{\itshape}{}{\bfseries}{.}{10pt}{{\thmname{#1}\thmnumber{
			#2}\thmnote{ (#3)}}}
\newtheoremstyle{thmlemcorr*}{10pt}{10pt}{\itshape}{}{\bfseries}{.}\newline{{\thmname{#1}\thmnumber{
\newtheoremstyle{defi}{10pt}{10pt}{\itshape}{}{\bfseries}{.}{10pt}{{\thmname{#1}\thmnumber{
			#2}\thmnote{ (#3)}}}
\newtheoremstyle{remexample}{10pt}{10pt}{}{}{\bfseries}{.}{10pt}{{\thmname{#1}\thmnumber{
			#2}\thmnote{ (#3)}}}
\newtheoremstyle{ass}{10pt}{10pt}{}{}{\bfseries}{.}{10pt}{{\thmname{#1}\thmnumber{
			A#2}\thmnote{ (#3)}}}
\theoremstyle{thmlemcorr}
\newtheorem{theorem}{Theorem}
\numberwithin{theorem}{section}
\newtheorem{lemma}[theorem]{Lemma}
\newtheorem{corollary}[theorem]{Corollary}
\theoremstyle{thmlemcorr*}
\newtheorem{theorem*}{Theorem}
\newtheorem{lemma*}[theorem]{Lemma}
\newtheorem{corollary*}[theorem]{Corollary}
\newtheorem{proposition*}[theorem]{Proposition}
\newtheorem{problem*}[theorem]{Problem}
\newtheorem{conjecture*}[theorem]{Conjecture}
\theoremstyle{defi}
\newtheorem{hyp}{Assumption}
\theoremstyle{remexample}
\newtheorem{remark}[theorem]{Remark}
\newtheorem{lem}[theorem]{Lemma}
\newtheorem{pro}[theorem]{Proposition}
\theoremstyle{ass}
\newtheorem*{notations*}{Notations}
\newtheorem*{acknowledgement*}{Acknowledgement}
\newtheorem*{relatedwork*}{Other related work}
\title[A MINI-BATCH METHOD For GAUSSIAN
PROCESS REGRESSION]{A MINI-BATCH METHOD For SOLVING NONLINEAR PDES WITH GAUSSIAN PROCESSES}
\author{Xianjin Yang$^{1,*}$, Houman Owhadi$^1$}
\address[X. Yang]{
	$^1$Computing \& Mathematical Sciences, California Institute of Technology, Pasadena, CA 91125.}
\email{yxjmath@caltech.edu}
\email{owhadi@caltech.edu}
\thanks{$^*$Corresponding author.}
\begin{document}
\maketitle

\begin{abstract}
Gaussian processes (GPs) based methods for solving partial differential equations (PDEs)  demonstrate great promise by bridging the gap between the theoretical rigor of traditional numerical algorithms and the flexible design of machine learning solvers. The main bottleneck of GP methods lies in the inversion of a covariance matrix, whose cost grows cubically concerning the size of samples. Drawing inspiration from neural networks, we propose a mini-batch algorithm combined with GPs to solve nonlinear PDEs. {\color{black}A naive deployment of a stochastic gradient descent method for solving PDEs with GPs is challenging, as the objective function in the requisite minimization problem cannot be depicted as the expectation of a finite-dimensional random function. To address this issue, we employ a mini-batch method to the corresponding infinite-dimensional minimization problem over  function spaces.}
The algorithm takes a mini-batch of samples at each step to update the GP model. Thus, the computational cost is allotted to each iteration. Using stability analysis and convexity arguments, we show that the mini-batch method steadily reduces a natural measure of errors towards zero at the rate of $O(1/K+1/M)$, where $K$ is the number of iterations and $M$ is the batch size.
\end{abstract}

\section{Introduction}
	This work develops a mini-batch method based on stochastic proximal algorithms \cite{davis2019stochastic, asi2019importance, asi2019stochastic} to solve nonlinear partial differential equations (PDEs) with Gaussian processes (GPs). PDEs have been widely used in science, economics, and biology \cite{quarteroni2008numerical, thomas2013numerical}. Since PDEs generally lack explicit solutions, numerical approximations are essential in applications. Standard numerical methods, for instance, finite difference \cite{thomas2013numerical} and finite element methods \cite{hughes2012finite},  for solving nonlinear PDEs are prone to the curse of dimensions. Recently, to keep pace with scaling problem sizes, machine learning methods,  including neural networks \cite{raissi2019physics, lu2021learning, karniadakis2021physics} and probabilistic algorithms \cite{raissi2018numerical, kramer2021probabilistic, chen2021solving}, arise a lot of attention. Especially, GPs present a promising approach that integrates the rigorous principles of traditional numerical algorithms with the adaptable framework of machine learning solvers \cite{raissi2018numerical, chen2021solving, wang2021bayesian, batlle2023error}. Our methods are based on the work of \cite{chen2021solving}, which proposes a GP framework for solving general nonlinear PDEs. A numerical approximation of the solution to a nonlinear PDE is viewed as the maximum \textit{a posteriori} (MAP) estimator of a GP conditioned on solving the PDE at a finite set of sample points. Equivalently, \cite{chen2021solving} proposes to approximate the unique strong solution of a nonlinear PDE in a reproducing kernel Hilbert space (RKHS) $\mathcal{U}$ by a minimizer of the following optimal recovery problem
\begin{align}
	\label{intro_nlprob}
	\min_{u\in \mathcal{U}}\frac{\lambda}{2}\|u\|_{\mathcal{U}}^2 + \frac{1}{N}\sum_{i=1}^N|f_i([\boldsymbol{\phi}_i, u])-{y}_i|^2,
\end{align}
where  $\lambda$ is a small regularization parameter, $N$ is the number of samples  in the domain, $f_i[\boldsymbol{\phi}_i, u]-y_i$ encodes the PDE equation satisfied at the $i^{\textit{th}}$ sample, and $\boldsymbol{\phi}_i$ represents the linear operators in the $i^{\textit{th}}$ equation. Let $\boldsymbol{\phi}=(\boldsymbol{\phi}_1,\dots, \boldsymbol{\phi}_N)$, let $\kappa$ be the kernel associated with  $\mathcal{U}$, and let $\Omega$ be the domain of the PDE. Denote by $\kappa(\boldsymbol{x}, \boldsymbol{\phi})$ the action of $\boldsymbol{\phi}$ on $\kappa(\boldsymbol{x},\cdot)$ for fixed $\boldsymbol{x}\in \Omega$ and by $\kappa(\boldsymbol{\phi}, \boldsymbol{\phi})$ the action of $\boldsymbol{\phi}$ on $\kappa(\cdot, \boldsymbol{\phi})$.    By the representer theorem (see \cite[Sec. 17.8]{owhadi2019operator}), a solution $u^*$ of \eqref{intro_nlprob} satisfies
	$u^*(\boldsymbol{x}) = \kappa(\boldsymbol{x}, \boldsymbol{\phi})\kappa(\boldsymbol{\phi, \boldsymbol{\phi}})^{-1}\boldsymbol{z}^*, \forall \boldsymbol{x}\in \Omega$, 
where $\boldsymbol{z}^*$ solves
\begin{align}
	\label{intro_zprob}
	\min_{\boldsymbol{z}} \frac{\lambda}{2}\boldsymbol{z}^T\kappa(\boldsymbol{\phi}, \boldsymbol{\phi})^{-1}\boldsymbol{z} + \frac{1}{2N}\sum_{i=1}^N|f_i(\boldsymbol{z}_i)-y_i|^2. 
\end{align}
Then, \cite{chen2021solving} proposes to solve \eqref{intro_zprob} by the Gauss--Newton method. {\color{black}We refer readers to \cite{chen2021solving, meng2023sparse} for illustrative examples on how to transform a PDE solving task into the minimization problem described in \eqref{intro_nlprob}.}
The computational cost of the above GP method grows cubically with respect to the number of samples due to the inversion of the covariance matrix $\kappa(\boldsymbol{\phi}, \boldsymbol{\phi})$. To improve the performance, based on \cite{chen2021solving}, \cite{mou2022numerical} proposes to approximate the solution of a PDE in a function space generated by random Fourier features \cite{rahimi2007random, yu2016orthogonal}, where the corresponding covariance matrix is a low-rank approximation to $\kappa(\boldsymbol{\phi}, \boldsymbol{\phi})$. In \cite{meng2023sparse}, another low-rank approximation method using sparse GPs is presented, employing a small number of inducing points to summarize observation information. Additionally, \cite{chen2023sparse} introduces an algorithm that efficiently computes the sparse Cholesky factorization of the inverted Gram matrix. Despite their computational cost-saving accomplishments, low-rank approximation methods lack the scalability of neural network-based algorithms. {\color{black}The primary cause for this difference lies in the scaling of parameters in the GP/Kernel method with respect to the number of collocation points, which in turn escalates with the increase in dimensions. Conversely, in neural network models, the parameters are designed to be independent of the number of data points.}

{\color{black}
Compared to methods based on neural networks \cite{raissi2019physics, mao2020physics, karniadakis2021physics} , Gaussian Process (GP)/Kernel-based approaches generally offer solid theoretical foundations. Additionally, from a probabilistic viewpoint, GP regressions provide uncertainty quantification. This quantification allows us to establish the confidence intervals of the results and aids in deciding the placement of collocation points. The exploration of how to effectively utilize uncertainty quantification will be a subject for future research.  Nevertheless, the GP method typically incurs a computational cost of $O(N^3)$, where 
$N$ represents the number of collocation points. On the other hand, besides their capacity to model a wide range of functions, neural network methods also excel in scalability, largely due to the use of stochastic gradient descent (SGD) algorithms with mini-batches. This scalability is facilitated by the independence of their data points from the parameters. However, due to the global correlation of the unknowns in   \eqref{intro_zprob}, the direct application of SGD is impeded by the need to solve the inversion of the covariance matrix in equation \eqref{intro_zprob}. Consequently, devising an effective SGD approach is vital for the GP methodology to reach a level of scalability on par with neural networks.} This paper attempts to handle this issue by proposing a mini-batch method inspired by stochastic proximal methods \cite{davis2019stochastic, asi2019importance, asi2019stochastic}. Rather than pursuing low-rank approximations, we stochastically solve equation  \eqref{intro_nlprob}. Each iteration selects a small portion of samples to update the GP model. Hence, we only need to consider the inversion of the covariance matrix of samples in a mini-batch each time. More precisely, we introduce a slack variable $\boldsymbol{z}$ and reformulate \eqref{intro_nlprob} as
\begin{align}
	\label{intro_mbnlprob}
	\min_{u\in \mathcal{U}, \boldsymbol{z}\in \mathcal{X}}\frac{\lambda}{2}\|u\|_{\mathcal{U}}^2 + \frac{\beta}{2N}\sum_{i=1}^N|[\boldsymbol{\phi}_i, u]-\boldsymbol{z}_i|^2 + \frac{1}{2N}\sum_{i=1}^N|f_i(\boldsymbol{z}_i)-{y}_i|^2,
\end{align}
where $\mathcal{X}$ is a convex compact set, $\beta>0$, and $\boldsymbol{z}=(\boldsymbol{z}_1,\dots,\boldsymbol{z}_N)$. We note that as $\beta\rightarrow \infty$, the solutions of \eqref{intro_mbnlprob} converge to those of \eqref{intro_nlprob}. The latent variable $\boldsymbol{z}$ encodes observations of $u$. Next, we view \eqref{intro_mbnlprob} as a stochastic optimization problem
\begin{align}
	\label{intro_stombnlprob}
	\min_{u\in \mathcal{U}, \boldsymbol{z}\in \mathcal{X}}E_{\xi}\bigg[\frac{\lambda}{2}\|u\|_{\mathcal{U}}^2 + \frac{\beta}{2}|[\boldsymbol{\phi}_\xi, u]-\boldsymbol{z}_\xi|^2 + \frac{1}{2}|f_\xi(\boldsymbol{z}_\xi)-{y}_\xi|^2\bigg],
\end{align}
where $\xi$ is a uniform random sample taken from the set of the whole samples. Then,  we iteratively solve \eqref{intro_stombnlprob} by computing the proximal map
\begin{align}
	\label{intro_stopm}
	(u^{k+1}, \boldsymbol{z}^{k+1}) =& \argmin\limits_{u\in \mathcal{U}, \boldsymbol{z}\in \mathcal{X}}\frac{\lambda}{2}\|u\|_{\mathcal{U}}^2 + \frac{\beta}{2M}\sum_{i\in \mathcal{I}_k}|[\boldsymbol{\phi}_i, u]-\boldsymbol{z}_i|^2\nonumber \\
	& + \frac{1}{2M}\sum_{i\in \mathcal{I}_k}|f_i(\boldsymbol{z}_i)-{y}_i|^2 + \frac{\gamma_k}{2}|\boldsymbol{z} - \boldsymbol{z}^k|^2,
\end{align}
where $\mathcal{I}_k$ contains a mini-batch of samples, $M$ is the cardinality of $\mathcal{I}_k$, and $\gamma_k$ is the step size at the $k^{\textit{th}}$ step. Denote by $\boldsymbol{v}_{\mathcal{I}_k}$ the vector containning elements of a vector $\boldsymbol{v}$ indexed by $\mathcal{I}_k$. Then, to handle the inifnite dimensional minimization problem in \eqref{intro_stopm}, we derive a representer formula for \eqref{intro_stopm} in Section \ref{secGPNM} such that 
\begin{align}
	\label{intro_nluzprli}
	u^{k+1}(\boldsymbol{x})=\kappa(\boldsymbol{x}, \boldsymbol{\phi}_{\mathcal{I}_k})\bigg(\kappa(\boldsymbol{\phi}_{\mathcal{I}_k}, \boldsymbol{\phi}_{\mathcal{I}_k}) + \frac{\lambda M}{\beta}{I} \bigg)^{-1}\boldsymbol{z}^{k+1}_{\mathcal{I}_k}, \boldsymbol{x} \in \Omega, 
\end{align}
where ${I}$ is the identity matrix,  $\boldsymbol{z}^{k+1}=(\boldsymbol{z}^{k}_{\mathcal{N}\backslash\mathcal{I}_k}, \boldsymbol{z}^{k+1}_{\mathcal{I}_k})$, and  $\boldsymbol{z}^{k+1}$ solves
\begin{align}
	\label{intro_nlzdexpl}
	\min_{\boldsymbol{z} \in \mathcal{X}}\frac{\lambda}{2}\boldsymbol{z}^T_{\mathcal{I}_k}\bigg(\kappa(\boldsymbol{\phi}_{\mathcal{I}_k}, \boldsymbol{\phi}_{\mathcal{I}_k})
	+ \frac{\lambda M}{\beta}I\bigg)^{-1}\boldsymbol{z}_{\mathcal{I}_k}
	 + \frac{1}{2M}\sum_{i\in \mathcal{I}_k}|f_i(z_i) - y_i|^2 +  \frac{\gamma_{k}}{2}|\boldsymbol{z}_{\mathcal{I}_k}-{\boldsymbol{z}}_{\mathcal{I}_k}^{k}|^2.  
\end{align}
Thus, the computational cost is reduced to $O(M^3)$ in each iteration if we use the standard Cholesky decomposition to compute the inverse of the covariance matrix for solving \eqref{intro_nluzprli} and \eqref{intro_nlzdexpl}. Under assumptions of boundedness of  $\boldsymbol{\phi}$ and the weakly convexity of the functions $\{f_i\}_{i=1}^N$, our convergence analysis shows that the error diminishes at the rate of {\color{black}$O(1/K + 1/M)$, where $K$ is the number of iterations and $M$ is the batch size. (For clarification, it's important to note that the computational complexity of addressing the inner minimization issue, as presented in \eqref{intro_stopm}, stands at $O(M^3)$. Meanwhile, the complexity associated with the outer iteration, which is relevant for the convergence of $(u^{k+1}, \boldsymbol{z}^{k+1})$, is represented as $O(1/K+1/M)$.) } 

{\color{black}To illustrate our contributions, we present a comparison of our mini-batch approach with highly related existing mini-batch methods in the literature. Notably, the studies by  \cite{chen2020stochastic, chen2022gaussian}  introduce SGD techniques for hyperparameter learning in GP regressions. A key distinction of our method lies in its treatment of the minimization over the latent vector $\boldsymbol{z}$, as outlined in \eqref{intro_zprob}. In contrast, \cite{chen2020stochastic, chen2022gaussian} focus on optimizing kernel hyperparameters for given data $\boldsymbol{z}$, without considering this minimization.
In alignment with our theoretical findings, the studies \cite{chen2020stochastic, chen2022gaussian} similarly identify errors comprising two main elements: an optimization error term and a statistical error term. These are determined by the number of optimization  iterations and the mini-batch size. Consequently, with a fixed batch size, it becomes  challenging to achieve zero error as the number of optimization steps grows. A key reason that we cannot eliminate the statistical error is the exclusion of a proximal step for $u$ as seen in \eqref{intro_stopm}. If we were to integrate the proximal step for $u$, it would require maintaining all mini-batch samples and making the $k^{\textit{th}}$ iteration's minimizer $u^k$ depending on the  expressions of all previous minimizers $u^1,\dots,u^{k-1}$. This integration would lead to significantly higher memory demands (see also Remark \ref{rmk:stcerr}).   Exploring ways to reduce statistical error while balancing memory efficiency remains an area for future investigation.

Furthermore, \cite{tran2020stochastic} develops a stochastic Gauss-Newton algorithm for non-convex compositional optimization. However, their approach is not applicable in our context due to the quadratic nature of \eqref{intro_zprob}, which is incompatible with their algorithm's requirement for an expectation-based formulation in finite-dimensional settings.  This is evident as the term $\boldsymbol{z}^T\kappa(\boldsymbol{\phi}, \boldsymbol{\phi})^{-1}\boldsymbol{z}$ in Equation (1.2) cannot be framed as an expectation over $\boldsymbol{z}$. Our method employs a mini-batch approach solving the infinite-dimensional minimization problem in \eqref{intro_nlprob}.

Another subtle difference is our methodological approach compared to that of \cite{tran2020stochastic},. While they initiate their process by linearizing the objective function and then applying mini-batch updates to this linearized problem, we start by formulating a nonlinear proximal minimization step in each iteration. For solving this, we primarily use the Gauss-Newton method. It's important to note, however, that the Gauss-Newton method in our framework can be substituted with any other efficient optimization algorithm suitable for the proximal iteration step.
}

We organize the paper as follows. Section \ref{secGPNM}  summarizes the GP method in \cite{chen2021solving} and proposes a mini-batch method to solve PDEs in a general setting. In Section \ref{secConverge}, we extend the arguments in the framework of \cite{davis2019stochastic, bousquet2002stability, deng2021minibatch} and present a convergence analysis of the min-batch algorithm based on stability analysis and convexity arguments. 
Numerical experiments appear in Section \ref{secNumResults}. 
Further discussions and future work appear in Section \ref{secConclu}. 

\begin{relatedwork*}
	Though GP regressions have achieved great success in various applications, the main drawback of it is the $O(N^3)$ computational time and $O(N^2)$ memory consumption for $N$ training points. Several techniques have been introduced to reduce the computational cost in recent years. One branch of these methods is to introduce sparsity into kernels of GPs based on variational inference procedures \cite{quinonero2005unifying, damianou2016variational, Hensman_2013, lazaro2009inter, wilson2015kernel, liu2020gaussian} or random Fourier features \cite{hensman2017variational, yu2016orthogonal, rahimi2007random, lazaro2010sparse}. Another trend in tackling the scalability of the GP regression is to utilize increasing computational power and GPU acceleration to solve exact GPs \cite{nguyen2019exact, gardner2018gpytorch, wang2019exact}. Alternatively, a sparse approximation for the inverse Cholesky factor of a dense covariance matrix is considered in 
	\cite{schafer2021sparse}. Recently, stochastic gradient descent methods (SGDs) have also been taken into consideration for GPs. The work \cite{chen2020stochastic} proposes a SGD method to perform hyperparameter learning in GP regressions. The critical difference between our method and the work in \cite{chen2020stochastic} is that the problem in \cite{chen2020stochastic}  does not take into consideration the minimization over the latent vector $\boldsymbol{z}$ in \eqref{intro_zprob}. Instead, the authors in \cite{chen2020stochastic} optimize over the hyperparameters of the kernel for known data  $\boldsymbol{z}$. 
	
	SGDs are widely used in stochastic optimization problems \cite{bottou2007tradeoffs, nemirovski2009robust, shalev2007pegasos, zinkevich2003online}. However, since the unknowns in \eqref{intro_zprob} are globally correlated, SGDs cannot be applied without solving the inversion of the covariance matrix in \eqref{intro_zprob}. Moreover, SGD methods are sensitive to the choice of stepsize and may diverge if objective functions do not satisfy the convergence criteria \cite{asi2019importance, asi2019stochastic, li2017hyperband}. Recently, stochastic proximal point and model-based methods \cite{asi2019stochastic, bertsekas2011incremental, davis2019stochastic, duchi2018stochastic, kulis2010implicit} are proposed to serve as robust alternatives to standard stochastic gradient methods. Several works \cite{duchi2018stochastic, davis2019stochastic} show that the stochastic proximal point and model-based methods exhibit the robustness to stepsize choice and the convergence on a broader range of difficult problems. 
\end{relatedwork*}

\begin{notations*}
Given an index set $\mathcal{I}$, we denote by $|\mathcal{I}|$  the cardinality of $\mathcal{I}$.  For a real-valued vector $\boldsymbol{v}$, we represent by $|\boldsymbol{v}|$ the Euclidean norm of $\boldsymbol{v}$ and by $\boldsymbol{v}^T$ its transpose. Let $\langle \boldsymbol{u}, \boldsymbol{v}\rangle$ or $\boldsymbol{u}^T\boldsymbol{v}$ be the inner product of vectors $\boldsymbol{u}$ and $\boldsymbol{v}$.  For a normed vector space $V$, let $\|\cdot\|_V$ be the norm of $V$. 
	Let $\mathcal{U}$ be a Banach space associated with a quadratic norm $\|\cdot\|_{\mathcal{U}}$ and let $\mathcal{U}^*$ be the dual of $\mathcal{U}$. Denote by $[\cdot, \cdot]$ the duality pairing between $\mathcal{U}^*$ and $\mathcal{U}$, and by $\langle \cdot, \cdot\rangle_{\mathcal{U}}$ the inner produce of $\mathcal{U}$.  Suppose  that there exists a linear, bijective, symmetric ($[\mathcal{K}_{\mathcal{U}}\phi, \psi]=[\mathcal{K}_{\mathcal{U}}\psi, \phi]$), and positive ($[\mathcal{K}_{\mathcal{U}}\phi, \phi]>0$ for $\phi\not=0$) covariance operator $\mathcal{K}_{\mathcal{U}}: \mathcal{U}^*\mapsto \mathcal{U}$, such that 
		$\|u\|_{\mathcal{U}}^2 = [\mathcal{K}^{-1}u, u], \forall u\in \mathcal{U}$. 
	Let $\{\phi_i\}_{i=1}^P$ be a set of $P\in\mathbb{N}$ elements in $\mathcal{U}^*$ and let  $\boldsymbol{\phi}:=(\phi_1, \dots, \phi_P)$ be in the product space ${(\mathcal{U}^*)}^{\bigotimes P}$. Then, for $u\in \mathcal{U}$, we write $[\boldsymbol{\phi}, u]:=([{\phi}_1, u], \dots, [{\phi}_P, u])$.
	Let $\|\boldsymbol{\phi}\|_{\mathcal{U}^*}^2=\sum_{i=1}^P\|{\phi}_i\|_{\mathcal{U}^*}^2$. 
	Furthermore, for $\boldsymbol{u}:=(u_1,\dots,u_S)\in \mathcal{U}^{\bigotimes S}$, $S\in \mathbb{N}$,  let $[\boldsymbol{\phi}, \boldsymbol{u}]\in\mathbb{R}^{P\times S}$ be the matrix with entries $[\phi_i, u_j]$. 
	Finally, we represent by $C$ a positive real number whose value  may change line by line. 
\end{notations*}

\section{GP Regressions with Nonlinear Measurements}
\label{secGPNM}
For ease of presentation, in this section, we introduce an iterative mini-batch method for solving nonlinear PDEs in the general setting of GP regressions with nonlinear measurements. We first summarize the GP method proposed by \cite{chen2021solving} in Subsection \ref{subsecRevisit}. Then, in Subsection \ref{subsecGPnl}, we detail the min-batch method and derive a representer theorem, from which we see that the mini-batch method allocates the burden of computations to each iteration. 
\subsection{A Revisit of The GP Method}
\label{subsecRevisit}
Here, we summarize the general framework of the GP method \cite{chen2021solving} for solving nonlinear PDEs. 
Let $\Omega\subset\mathbb{R}^d$ be an open bounded domain with the boundary $\partial \Omega$ for $d\geq 1$. Let $L_1,\dots, L_{D_b}, L_{D_b+1}, \dots, L_D$ be bounded linear operators for $1\leq D_b\leq D$, where $D_b, D\in \mathbb{N}$. We solve
\begin{align}
	\label{pdegrf}
	\begin{cases}
		{P}(L_{D_b+1}(u^*)(\boldsymbol{x}),\dots, L_D(u^*)(\boldsymbol{x})) = f(\boldsymbol{x}), \forall \boldsymbol{x}\in \Omega,\\
		{B}(L_1(u^*)(\boldsymbol{x}),\dots, L_{D_b}(u^*)(\boldsymbol{x})) = g(\boldsymbol{x}), \forall \boldsymbol{x}\in \partial \Omega,
	\end{cases}
\end{align}
where $u^*$ is the unknown function, $P$ and $B$ represent nonlinear operators, and $f$, $g$ are given data. Throughout this paper, we suppose that \eqref{pdegrf} admits a unique strong solution in a RKHS $\mathcal{U}$ associated with the covariance operator $\mathcal{K}$. 

To approximate $u^*$ in $\mathcal{U}$, we first take $N$ sample points $\{{\boldsymbol{x}}_j\}_{j=1}^N$ in $\overline{\Omega}$ such that $\{{\boldsymbol{x}}_j\}_{j=1}^{N_\Omega}\subset\Omega$ and $\{{\boldsymbol{x}}_j\}_{j=N_{\Omega}+1}^N\subset\partial\Omega$ for $1\leq N_\Omega\leq N$. Then, we define 
\begin{align*} 
	\phi^{(i)}_j:=\delta_{{\boldsymbol{x}}_j}\circ L_i, \text{ and } \begin{cases}
		N_{\Omega}+1\leq j\leq N, \text{ if } 1\leq i\leq D_b,\\
		1\leq j\leq N_\Omega, \text{ if } D_{b}+1\leq i\leq D. 
	\end{cases}
\end{align*}
Let $\boldsymbol{\phi}^{(i)}$ be the vector concatenating the functionals $\phi_j^{(i)}$ for fixed $i$ and define
\begin{align}
	\label{defindiline}
	\boldsymbol{\phi}:=(\boldsymbol{\phi}^{(1)}, \dots, \boldsymbol{\phi}^{(D)})\in (\mathcal{U}^*)^{\otimes R}, \text{ where } R=(N-N_\Omega)D_b + N_{\Omega}(D-D_b). 
\end{align}
Next, we define the data vector $\boldsymbol{y}\in \mathbb{R}^N$ by
\begin{align*}
	y_i = \begin{cases}
		f(\boldsymbol{x}_i), \text{ if } i\in \{1,\dots, N_{\Omega}\},\\
		g(\boldsymbol{x}_i), \text{ if } i\in \{N_{\Omega}+1, \dots, N\}. 
	\end{cases}
\end{align*}
Furthermore, we define the nonlinear map
\begin{align*}
	(F([\boldsymbol{\phi}, u])_j:=\begin{cases}
		{P}([\phi_j^{(D_b+1)}, u],\dots, [\phi_j^{(D)}, u]), \text{ for } j = 1,\dots, N_{\Omega},\\
		{B}([\phi_j^{(1)}, u],\dots, [\phi_j^{(D_b)}, u]), \text{ for } j=N_{\Omega}+1,\dots, N. 
	\end{cases}
\end{align*}
Then, given a regularization parameter $\lambda>0$, the GP method \cite{chen2021solving} approximates $u^*$ by the minimizer of the following optimization problem
\begin{align}
	\label{pdeminprob}
	\min\limits_{u\in \mathcal{U}}&  \frac{\lambda}{2}\|u\|_{\mathcal{U}}^2 + \frac{1}{2N}|F([\boldsymbol{\phi}, u] - \boldsymbol{y}|^2. 
\end{align}
The authors in \cite{chen2021solving} show that as $\lambda\rightarrow 0$ and as the fill distance of samples decreases, a minimizer of \eqref{pdeminprob} converges to the unique strong solution of \eqref{pdegrf} (see Sec. 3 of \cite{chen2021solving}). 

\subsection{GP Regressions with Nonlinear Measurements}
\label{subsecGPnl}
In this subsection, we propose a mini-batch method to solve \eqref{pdeminprob}. For ease of presentation, we abstract \eqref{pdeminprob} into a general framework by considering a GP problem with nonlinear observations. More precisely, let $\mathcal{N}=\{1, \dots, N\}$ and  $\boldsymbol{\phi}=(\boldsymbol{\phi}_1, \dots, \boldsymbol{\phi}_N)$, where $\boldsymbol{\phi}_i\in (\mathcal{U}^*)^{\bigotimes N_i}$ for some $N_i\in \mathbb{N}$ and $i\in \mathcal{N}$. For an unknown function $u$ and  each $i\in \mathcal{N}$, there exists a function $f_i$ such that we get a noisy observation $y_i$ on $f_i([\boldsymbol{\phi}_i, u])$. To approximate $u$ based on the nonlinear observations, we  solve 
\begin{align}
	\label{nlprob}
	\min_{u\in \mathcal{U}}\frac{\lambda}{2}\|u\|_{\mathcal{U}}^2 + \frac{1}{2N}\sum_{i=1}^N|f_i([\boldsymbol{\phi}_i, u])-{y}_i|^2,
\end{align}
where  $\lambda>0$. 
Hence, \eqref{nlprob} resembles \eqref{pdeminprob} in such a way that 
$f_i([\boldsymbol{\phi}_i, u])-y_i$  corresponds to the PDE equation satisfied by $u$ at the $i^{\textit{th}}$ sample point. 

As we mentioned before, a typical algorithm using the Cholesky decomposition to compute the inverse of the corresponding covariance matrix suffers the cost of $O(N^3)$. Here, we introduce an algorithm to solve \eqref{nlprob} using mini-batches such that the dimensions of matrices to be inverted depend only on the sizes of mini-batches. 

First, we reformulate a relaxed version of \eqref{nlprob}. More precisely, we introduce slack variables $\boldsymbol{z}=(\boldsymbol{z}_1,\dots,\boldsymbol{z}_N)$, a regularization parameter $\beta>0$, and consider the following regularized optimal recovery problem
\begin{align}
	\label{nltlprob}
	\min_{u\in \mathcal{U}, \boldsymbol{z}\in \mathcal{X}}\varphi(u, \boldsymbol{z}), 
\end{align}
where
\begin{align}
	\label{nldefvph}
	\varphi(u, \boldsymbol{z}) = \frac{\lambda}{2}\|u\|_{\mathcal{U}}^2 + \frac{\beta}{2N}\sum_{i=1}^N|[\boldsymbol{\phi}_i, u]-\boldsymbol{z}_i|^2 + \frac{1}{2N}\sum_{i=1}^N|f_i(\boldsymbol{z}_i)-{y}_i|^2,
\end{align}
$\mathcal{X}$ is a convex compact subset of $\mathbb{R}^{\sum_{i=1}^NN_i}$, $0\in \mathcal{X}$, and $\boldsymbol{y}\in \mathcal{X}$. We further assume that $\mathcal{X}$ is the product space of a set of convex compact subsets $\{\mathcal{X}_i\}^N$, where $\mathcal{X}_i\in \mathbb{R}^{N_i}$.  
\begin{remark}
	\label{rmklubdmnp}
	The restriction of $\boldsymbol{z}$ on $\mathcal{X}$ is not restrictive. Indeed, we can reformulate the unconstrained minimization problem
	\begin{align}
		\label{GPLinMFm1mbct}
		\min_{u\in \mathcal{U}, \boldsymbol{z}\in \mathbb{R}^{\sum_{i=1}^NN_i}}\frac{\lambda}{2}\|u\|_{\mathcal{U}}^2 + \frac{\beta}{2N}\sum_{i=1}^N|[\boldsymbol{\phi}_i, u]-\boldsymbol{z}_i|^2 + \frac{1}{2N}\sum_{i=1}^N|f_i(\boldsymbol{z}_i)-{y}_i|^2, 
	\end{align}
	into the formulation of \eqref{nltlprob} by introducing a large enough compact set $\mathcal{X}$. More precisely, let $(u_\beta, \boldsymbol{z}_\beta)$ be the minimizer of \eqref{GPLinMFm1mbct}. We have 
	\begin{align*}
		\varphi(u_{\beta}, \boldsymbol{z}_\beta) \leq \varphi(0, \boldsymbol{0}) = \frac{1}{2N}\sum_{i=1}^N|f_i(\boldsymbol{0}) - y_i|^2,
	\end{align*}
	which implies that $u_\beta$ and $\boldsymbol{z}_\beta$ are bounded if $\sum_{i=1}^N|f_i(\boldsymbol{0})-y_i|^2<+\infty$. Hence, if $f_i, i=1,\dots, N$ are bounded,  \eqref{GPLinMFm1mbct} and \eqref{nltlprob} are equivalent for large  $\mathcal{X}$. Thus, we mainly focus on \eqref{nltlprob} to take into consideration   situations when $\boldsymbol{z}$ is constrained. 
	
\end{remark}

Next, we introduce a mini-batch version of  $\varphi$ defined in \eqref{nltlprob}. For $\mathcal{I}\subset \mathcal{N}$ with $|\mathcal{I}|=M$, we define
\begin{align}
	\label{nlGPvpI}
	\varphi(u, \boldsymbol{z}; \mathcal{I})=\frac{\lambda}{2}\|u\|_{\mathcal{U}}^2 + \frac{\beta}{2 M}\sum_{i\in \mathcal{I}}|[\boldsymbol{\phi}_i, u] - \boldsymbol{z}_i|^2 + \frac{1}{2M}\sum_{i\in \mathcal{I}}|f_i(\boldsymbol{z}_i) - y_i|^2. 
\end{align}
In particular, when $M=1$ and $\mathcal{I}=\{\xi\}$ for $\xi\in \mathcal{N}$, we denote
\begin{align*}
	\varphi(u, \boldsymbol{z}; \xi):=\varphi(u, \boldsymbol{z}; \{\xi\})=\frac{\lambda}{2}\|u\|_{\mathcal{U}}^2 + \frac{\beta}{2}|[\boldsymbol{\phi}_{\xi}, u] - \boldsymbol{z}_{\xi}|^2 + \frac{1}{2}|f_{\xi}(\boldsymbol{z}_{\xi}) - y_{\xi}|^2. 
\end{align*}

Then, we detail the mini-batch method in Algorithm \ref{alg:1}. Given  $\boldsymbol{z}^1\in \mathcal{X}$ and the iteration number $K$, at the $k^{\textit{th}}$ step, we update $(u^{k+1}, \boldsymbol{z}^{k+1})$ by solving the minimization problem \eqref{alg:1:upts}.
Finally, we uniformly sample $k^*$ from $\{1, \dots, K\}$ and output $(\hat{u}^{k^*}, \hat{\boldsymbol{z}}^{k^*})$ as an approximation to a solution of \eqref{nltlprob}. 
\begin{remark}
	\label{lnfse}
	In Algorithm \ref{alg:1}, we introduce the last step \eqref{alg:1:fnop} for error analysis. 
	In practice, we do not need to solve \eqref{alg:1:fnop} directly since computing \eqref{alg:1:fnop} consumes the same computational cost as calculating \eqref{nlprob}. Instead, if we are interested in the value of $u$ at a test point $\Tilde{\boldsymbol{x}}$, we can take a mini-batch $\Tilde{\mathcal{I}}$ containing training samples at the neighbor of  $\Tilde{\boldsymbol{x}}$, choose $k^*$ and $\rho$ as in Algorithm \ref{alg:1}, compute
	$(\Tilde{u}, \Tilde{\boldsymbol{z}}) = \argmin_{u\in \mathcal{U}, \boldsymbol{z}\in \mathcal{X}} \varphi(u, \boldsymbol{z}; \Tilde{\mathcal{I}}) + \frac{\rho}{2}|\boldsymbol{z} - \boldsymbol{z}^{k^*}|^2$,
	and evaluate $\Tilde{u}$ at $\Tilde{\boldsymbol{x}}$. An alternative way is to include the test points of interest in the training samples and solve \eqref{nltlprob} using the mini-batch method. Then, the latent vector $\boldsymbol{z}^{k^*}$ contains sufficient information at $\tilde{\boldsymbol{x}}$. 
\end{remark}
{\color{black}
\begin{remark}
\label{rmk:pcd}
	Algorithm \ref{alg:1} can be readily extended to incorporate a preconditioned variant. Specifically, let \( Q_k \) be a positive definite matrix associated with the \( k^{\textit{th}} \) iteration. The update step in Equation \eqref{alg:1:upts} is modified as follows:
	\begin{align}
		\label{alg:1:precond:upts}
		(u^{k+1}, \boldsymbol{z}^{k+1}) =& \argmin_{u\in \mathcal{U}, \boldsymbol{z}\in \mathcal{X}} \varphi(u, \boldsymbol{z}; \mathcal{I}_k) + \frac{1}{2}|\boldsymbol{z} - \boldsymbol{z}^k|^2_{Q_k},
	\end{align}
	where the norm \( |\cdot|_{Q_k} \) is defined by \( |y|_{Q_k}^2=y^TQ_k^{-1}y \). This preconditioned approach maintains the theoretical foundations of Algorithm \ref{alg:1}, given that the minimum eigenvalue of $Q_k$ meets the analogous conditions set by \( \gamma_k \) in \eqref{alg:1:upts} for each iteration \( k \). This adaptation offers enhanced flexibility and potential efficiency improvements in various computational scenarios.
\end{remark}
}

\begin{algorithm}
	\caption{The GP Regression With Mini-batches}\label{alg:1}
	$\textbf{Input}: \lambda>0, \beta>0, \boldsymbol{z}^1\in \mathcal{X}, \text{iteration number $K$}$
	
	$\textbf{For } k = 1,\dots,K$
	
	Take a mini-batch $\mathcal{I}_k$ from $\mathcal{N}$, choose a step size $\gamma_k$, and update $\boldsymbol{z}^{k+1}$ by solving 
	\begin{align}
		(u^{k+1}, \boldsymbol{z}^{k+1}) =& \argmin_{u\in \mathcal{U}, \boldsymbol{z}\in \mathcal{X}} \varphi(u, \boldsymbol{z}; \mathcal{I}_k) + \frac{\gamma_k}{2}|\boldsymbol{z} - \boldsymbol{z}^k|^2. \label{alg:1:upts}
	\end{align}
	
	$\textbf{End For}$
	
	Choose $\rho>0$, sample $k^*$ uniformly in $\{1, \dots, K\}$ and compute
	\begin{align}
		(\hat{u}^{k^*}, \hat{\boldsymbol{z}}^{k^*}) =& \argmin_{u\in \mathcal{U}, \boldsymbol{z}\in \mathcal{X}} \varphi(u, \boldsymbol{z}) + \frac{\rho}{2}|\boldsymbol{z} - \boldsymbol{z}^{k^*}|^2. \label{alg:1:fnop}
	\end{align}
	
	$\textbf{Output: } \hat{u}^{k^*}, \hat{\boldsymbol{z}}^{k^*}$. 
	
\end{algorithm}
Though the minimization problem in  \eqref{alg:1:upts} is infinite-dimensional,  we can reformulate \eqref{alg:1:upts} into a finite-dimensional minimization problem by using the framework developed in \cite{smale2005shannon}. The following lemma resembles Lemma 3.3 in \cite{meng2023sparse} and establishes properties of a sampling operator, which are crucial for us to derive a representer formula for \eqref{alg:1:upts}. 
We omit the proof since the arguments are the same as those for Lemma 3.3 of \cite{meng2023sparse}. For simplicity, we denote by $I$ the identity map or the identity matrix, whose meaning is easily recognizable from the context. 

\begin{lemma}
	\label{solm}
	Let $\mathcal{N}=\{1, \dots, N\}$ and $\mathcal{I}\subset \mathcal{N}$. Let $\boldsymbol{\phi}$ be as in \eqref{nlprob} and let $\boldsymbol{\phi}_{\mathcal{I}}$ be the subset of $\boldsymbol{\phi}$ indexed by $\mathcal{I}$. Denote by $\ell^2(\mathcal{I})$ the set of sequences $\boldsymbol{a}=(a_{i})_{i \in \mathcal{I}}$ with the inner product $\langle \boldsymbol{a}, \boldsymbol{b}\rangle=\sum_{i\in \mathcal{I}}a_{i}b_{i}$. 
	Define the sampling operator $S_{\mathcal{I}}:\mathcal{U}\mapsto \ell^2(\mathcal{I})$ by $
		S_{\mathcal{I}}u=[\boldsymbol{\phi}_{\mathcal{I}}, u], \forall u\in \mathcal{U}$.
	Let $S_{\mathcal{I}}^*$ be the adjoint of $S_{\mathcal{I}}$.  Then, for each $c\in \ell^2(\mathcal{I})$, 
		$S_{\mathcal{I}}^*c = c^T\mathcal{K}\boldsymbol{\phi}_{\mathcal{I}}, \forall c\in \ell^2(\mathcal{I})$. 
	Besides, for any $\gamma>0$, ${S}_{\mathcal{I}}^*{S}_{\mathcal{I}}+\gamma I$ and ${S}_{\mathcal{I}}{S}_{\mathcal{I}}^*+\gamma I$ are bijective. Meanwhile, 
	for any $c\in \ell^2(\mathcal{I})$, we have
	\begin{align}
		\label{eqre0}
		({S}_{\mathcal{I}}^*{S}_{\mathcal{I}}+\gamma I)^{-1}{S}_{\mathcal{I}}^*c = (\mathcal{K}\boldsymbol{\phi}_{\mathcal{I}})^T(\gamma I + \mathcal{K}(\boldsymbol{\phi}_{\mathcal{I}}, \boldsymbol{\phi}_{\mathcal{I}}))^{-1}c. 
	\end{align}
	Furthermore, 
	\begin{align}
		\label{bdImus}
		I-S_{\mathcal{I}}(S_{\mathcal{I}}^*S_{\mathcal{I}}+\gamma I)^{-1}S_{\mathcal{I}}^*=\gamma(\mathcal{K}(\boldsymbol{\phi}_{\mathcal{I}}, \boldsymbol{\phi}_{\mathcal{I}})+\gamma I)^{-1}. 
	\end{align}
\end{lemma}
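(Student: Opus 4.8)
The plan is to treat $\mathcal{U}$ as the Hilbert space it implicitly is---the quadratic norm is induced by the inner product $\langle\cdot,\cdot\rangle_{\mathcal{U}}$, which is related to the duality pairing by $\langle u,v\rangle_{\mathcal{U}}=[\mathcal{K}^{-1}u,v]$---and to reduce every assertion to two purely algebraic operator identities together with one genuinely infinite-dimensional invertibility statement. First I would establish the adjoint formula directly from the definition: for $c\in\ell^2(\mathcal{I})$ and $u\in\mathcal{U}$, $\langle S_{\mathcal{I}}^*c,u\rangle_{\mathcal{U}}=\langle c,S_{\mathcal{I}}u\rangle=\sum_{i\in\mathcal{I}}c_i[\phi_i,u]$. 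Since $\mathcal{K}$ is bijective with $\mathcal{K}^{-1}\mathcal{K}=I$, we have $[\phi_i,u]=[\mathcal{K}^{-1}\mathcal{K}\phi_i,u]=\langle\mathcal{K}\phi_i,u\rangle_{\mathcal{U}}$, so $\langle S_{\mathcal{I}}^*c,u\rangle_{\mathcal{U}}=\langle\sum_{i}c_i\mathcal{K}\phi_i,u\rangle_{\mathcal{U}}$ for every $u$, whence $S_{\mathcal{I}}^*c=\sum_{i\in\mathcal{I}}c_i\mathcal{K}\phi_i=c^T\mathcal{K}\boldsymbol{\phi}_{\mathcal{I}}$. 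The same computation shows that the finite-rank operator $S_{\mathcal{I}}S_{\mathcal{I}}^*$ on $\ell^2(\mathcal{I})$ has matrix $\mathcal{K}(\boldsymbol{\phi}_{\mathcal{I}},\boldsymbol{\phi}_{\mathcal{I}})$, with entries $[\phi_j,\mathcal{K}\phi_i]$; this identification is the bridge I will reuse throughout.

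Next I would address bijectivity. Because each $\phi_i\in\mathcal{U}^*$, the sampling operator $S_{\mathcal{I}}$ is bounded, so $S_{\mathcal{I}}^*S_{\mathcal{I}}$ is bounded, self-adjoint, and positive semidefinite; therefore $\langle(S_{\mathcal{I}}^*S_{\mathcal{I}}+\gamma I)u,u\rangle_{\mathcal{U}}=\|S_{\mathcal{I}}u\|^2+\gamma\|u\|_{\mathcal{U}}^2\geq\gamma\|u\|_{\mathcal{U}}^2$, and coercivity via Lax--Milgram (equivalently, the spectral fact that a self-adjoint operator bounded below by $\gamma>0$ is invertible) gives that $S_{\mathcal{I}}^*S_{\mathcal{I}}+\gamma I$ is bijective on $\mathcal{U}$. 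The operator $S_{\mathcal{I}}S_{\mathcal{I}}^*+\gamma I$ acts on the finite-dimensional space $\ell^2(\mathcal{I})$, where the same coercivity yields injectivity and hence bijectivity.

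The two representer identities then follow from the push-through identity. From the exact relation $S_{\mathcal{I}}^*(S_{\mathcal{I}}S_{\mathcal{I}}^*+\gamma I)=(S_{\mathcal{I}}^*S_{\mathcal{I}}+\gamma I)S_{\mathcal{I}}^*$, left- and right-multiplying by the two inverses gives $(S_{\mathcal{I}}^*S_{\mathcal{I}}+\gamma I)^{-1}S_{\mathcal{I}}^*=S_{\mathcal{I}}^*(S_{\mathcal{I}}S_{\mathcal{I}}^*+\gamma I)^{-1}$. Substituting the adjoint formula and the identification $S_{\mathcal{I}}S_{\mathcal{I}}^*=\mathcal{K}(\boldsymbol{\phi}_{\mathcal{I}},\boldsymbol{\phi}_{\mathcal{I}})$ yields \eqref{eqre0} immediately. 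For \eqref{bdImus}, the push-through identity gives $S_{\mathcal{I}}(S_{\mathcal{I}}^*S_{\mathcal{I}}+\gamma I)^{-1}S_{\mathcal{I}}^*=G(G+\gamma I)^{-1}$ with $G=\mathcal{K}(\boldsymbol{\phi}_{\mathcal{I}},\boldsymbol{\phi}_{\mathcal{I}})$, and then the elementary identity $I-G(G+\gamma I)^{-1}=\gamma(G+\gamma I)^{-1}$ finishes the proof.

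The only step demanding genuine care, rather than finite linear algebra, is the bijectivity of $S_{\mathcal{I}}^*S_{\mathcal{I}}+\gamma I$ on the infinite-dimensional space $\mathcal{U}$; this is where I would lean on the Hilbert structure and coercivity instead of a dimension count. Everything else is bookkeeping between the pairing $[\cdot,\cdot]$ and the inner product $\langle\cdot,\cdot\rangle_{\mathcal{U}}$ together with the push-through identity, so the argument runs in close parallel to Lemma 3.3 of \cite{meng2022sparse}.
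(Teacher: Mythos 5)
Your proof is correct, and it is essentially the standard argument the paper points to: the paper omits its own proof and defers to Lemma 3.3 of \cite{meng2022sparse} (following \cite{smale2005shannon}), where the same steps appear --- compute $S_{\mathcal{I}}^*$ from the identity $[\phi_i,u]=\langle \mathcal{K}\phi_i,u\rangle_{\mathcal{U}}$, identify $S_{\mathcal{I}}S_{\mathcal{I}}^*$ with the Gram matrix $\mathcal{K}(\boldsymbol{\phi}_{\mathcal{I}},\boldsymbol{\phi}_{\mathcal{I}})$, get bijectivity from coercivity, and derive \eqref{eqre0} and \eqref{bdImus} from the push-through identity $S_{\mathcal{I}}^*(S_{\mathcal{I}}S_{\mathcal{I}}^*+\gamma I)=(S_{\mathcal{I}}^*S_{\mathcal{I}}+\gamma I)S_{\mathcal{I}}^*$. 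No gaps; your handling of the duality pairing versus the inner product and of the one genuinely infinite-dimensional step (invertibility of $S_{\mathcal{I}}^*S_{\mathcal{I}}+\gamma I$ on $\mathcal{U}$) is exactly what is needed.
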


The next lemma provides a representer formula for \eqref{alg:1:upts}. The arguments are similar to those for Theorem 3.4 of \cite{meng2023sparse}, which we postpone to Appendix \ref{secAppendix}. We recall that for an index set $\mathcal{I}$ and a vector $\boldsymbol{v}$,  $\boldsymbol{v}_{\mathcal{I}}$ represents the vector consisting of elements of $\boldsymbol{v}$ indexed by $\mathcal{I}$. 
\begin{lemma}
	\label{nlreprelm}
	Let $\mathcal{N}=\{1, \dots, N\}$, let $\mathcal{I}\subset \mathcal{N}$, $|\mathcal{I}|=M$,  and let $\varphi(\cdot, \cdot;\mathcal{I}):\mathcal{U}\times\mathcal{X}\mapsto \mathbb{R}$ be defined as in \eqref{nlGPvpI}. Given $\gamma\geq 0$ and $\Bar{\boldsymbol{z}}\in \mathcal{X}$, we define 
	\begin{align}
		\label{nlhproxpr}
		(u^{\dagger}, {\boldsymbol{z}}^{\dagger})=\argmin_{u\in \mathcal{U}, \boldsymbol{z}\in \mathcal{X}}\varphi(u, \boldsymbol{z}; \mathcal{I}) +  \frac{\gamma}{2}|\boldsymbol{z}-\bar{\boldsymbol{z}}|^2. 
	\end{align}
	Let $\boldsymbol{\phi}$ be as in \eqref{nlprob} and let $\boldsymbol{\phi}_{\mathcal{I}}$ be the subset of $\boldsymbol{\phi}$ indexed by $\mathcal{I}$.  Then,
	\begin{align}
		\label{nluzprli}
		u^{\dagger}=\mathcal{K}(\boldsymbol{\phi}_{\mathcal{I}})\bigg(\mathcal{K}(\boldsymbol{\phi}_{\mathcal{I}}, \boldsymbol{\phi}_{\mathcal{I}}) + \frac{\lambda M}{\beta}{I} \bigg)^{-1}\boldsymbol{z}^{\dagger}_{\mathcal{I}}, 
	\end{align}
	where ${I}$ is the identity matrix and  $\boldsymbol{z}^\dagger=(\bar{\boldsymbol{z}}_{\mathcal{N}\backslash\mathcal{I}}, \boldsymbol{z}^\dagger_{\mathcal{I}})$   is a solution to
	\begin{align}
		\label{nlmzp}
		\min_{\boldsymbol{z}\in \mathcal{X}}\frac{\lambda}{2}\boldsymbol{z}^T_{\mathcal{I}}\bigg(\mathcal{K}(\boldsymbol{\phi}_{\mathcal{I}}, \boldsymbol{\phi}_{\mathcal{I}}) + \frac{\lambda M}{\beta}I\bigg)^{-1}\boldsymbol{z}_{\mathcal{I}} + \frac{1}{2M}\sum_{i\in \mathcal{I}}|f_i(z_i) - y_i|^2 +  \frac{\gamma}{2}|\boldsymbol{z}-\Bar{\boldsymbol{z}}|^2. 
	\end{align}
\end{lemma}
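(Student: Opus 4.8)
The plan is to minimize jointly over $(u,\boldsymbol{z})$ by first eliminating the infinite-dimensional variable $u$ through a partial minimization, and then reducing to a finite-dimensional problem in $\boldsymbol{z}$. The key structural observation is that in the objective $\varphi(u,\boldsymbol{z};\mathcal{I}) + \frac{\gamma}{2}|\boldsymbol{z}-\bar{\boldsymbol{z}}|^2$ the function $u$ enters only through the first two terms, $\frac{\lambda}{2}\|u\|_{\mathcal{U}}^2 + \frac{\beta}{2M}\sum_{i\in\mathcal{I}}|[\boldsymbol{\phi}_i,u]-\boldsymbol{z}_i|^2$, whereas $\frac{1}{2M}\sum_{i\in\mathcal{I}}|f_i(\boldsymbol{z}_i)-y_i|^2 + \frac{\gamma}{2}|\boldsymbol{z}-\bar{\boldsymbol{z}}|^2$ depends on $\boldsymbol{z}$ alone. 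Hence I would write the joint minimum as $\min_{\boldsymbol{z}\in\mathcal{X}}\big(\Psi(\boldsymbol{z}_{\mathcal{I}}) + \frac{1}{2M}\sum_{i\in\mathcal{I}}|f_i(\boldsymbol{z}_i)-y_i|^2 + \frac{\gamma}{2}|\boldsymbol{z}-\bar{\boldsymbol{z}}|^2\big)$, where the inner value function is $\Psi(\boldsymbol{z}_{\mathcal{I}}) := \min_{u\in\mathcal{U}}\big(\frac{\lambda}{2}\|u\|_{\mathcal{U}}^2 + \frac{\beta}{2M}\|S_{\mathcal{I}}u - \boldsymbol{z}_{\mathcal{I}}\|_{\ell^2(\mathcal{I})}^2\big)$ and $S_{\mathcal{I}}$ is the sampling operator of Lemma \ref{solm}.

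For the inner problem, the functional of $u$ is strictly convex and coercive thanks to $\frac{\lambda}{2}\|u\|_{\mathcal{U}}^2$, so a unique minimizer $u^{\dagger}$ exists and is characterized by its Euler--Lagrange equation. Taking the gradient with respect to the $\mathcal{U}$-inner product and using the adjoint $S_{\mathcal{I}}^*$ gives $\lambda u + \frac{\beta}{M}S_{\mathcal{I}}^*(S_{\mathcal{I}}u - \boldsymbol{z}_{\mathcal{I}}) = 0$, that is $\big(S_{\mathcal{I}}^*S_{\mathcal{I}} + \frac{\lambda M}{\beta}I\big)u^{\dagger} = S_{\mathcal{I}}^*\boldsymbol{z}_{\mathcal{I}}$. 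Since $S_{\mathcal{I}}^*S_{\mathcal{I}} + \frac{\lambda M}{\beta}I$ is bijective by Lemma \ref{solm}, I can solve for $u^{\dagger}$ and apply identity \eqref{eqre0} with its generic parameter set to $\frac{\lambda M}{\beta}$ and $c=\boldsymbol{z}_{\mathcal{I}}$. This produces exactly the representer formula \eqref{nluzprli}, namely $u^{\dagger} = \mathcal{K}(\boldsymbol{\phi}_{\mathcal{I}})\big(\mathcal{K}(\boldsymbol{\phi}_{\mathcal{I}},\boldsymbol{\phi}_{\mathcal{I}}) + \frac{\lambda M}{\beta}I\big)^{-1}\boldsymbol{z}_{\mathcal{I}}$, once $\boldsymbol{z}_{\mathcal{I}}$ is the optimal latent vector $\boldsymbol{z}_{\mathcal{I}}^{\dagger}$.

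The core computation, and the step I expect to be the main obstacle, is evaluating the inner value $\Psi(\boldsymbol{z}_{\mathcal{I}})$ in closed form and recognizing it as the quadratic term of \eqref{nlmzp}. Writing $A = S_{\mathcal{I}}^*S_{\mathcal{I}} + \frac{\lambda M}{\beta}I$ and $u^{\dagger} = A^{-1}S_{\mathcal{I}}^*\boldsymbol{z}_{\mathcal{I}}$, I would expand the quadratic and use the optimality relation $Au^{\dagger} = S_{\mathcal{I}}^*\boldsymbol{z}_{\mathcal{I}}$ to collapse the cross terms, obtaining $\Psi(\boldsymbol{z}_{\mathcal{I}}) = \frac{\beta}{2M}\big(|\boldsymbol{z}_{\mathcal{I}}|^2 - \langle S_{\mathcal{I}}A^{-1}S_{\mathcal{I}}^*\boldsymbol{z}_{\mathcal{I}}, \boldsymbol{z}_{\mathcal{I}}\rangle\big)$. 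The decisive input is identity \eqref{bdImus}, which with $\gamma = \frac{\lambda M}{\beta}$ reads $I - S_{\mathcal{I}}A^{-1}S_{\mathcal{I}}^* = \frac{\lambda M}{\beta}\big(\mathcal{K}(\boldsymbol{\phi}_{\mathcal{I}},\boldsymbol{\phi}_{\mathcal{I}}) + \frac{\lambda M}{\beta}I\big)^{-1}$; substituting it and simplifying the constants via $\frac{\beta}{2M}\cdot\frac{\lambda M}{\beta} = \frac{\lambda}{2}$ yields $\Psi(\boldsymbol{z}_{\mathcal{I}}) = \frac{\lambda}{2}\boldsymbol{z}_{\mathcal{I}}^T\big(\mathcal{K}(\boldsymbol{\phi}_{\mathcal{I}},\boldsymbol{\phi}_{\mathcal{I}}) + \frac{\lambda M}{\beta}I\big)^{-1}\boldsymbol{z}_{\mathcal{I}}$, precisely the first term of \eqref{nlmzp}. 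The difficulty here is purely the careful bookkeeping of the adjoint identities and the scaling constants; conceptually the two identities of Lemma \ref{solm} are tailored exactly for this elimination.

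Finally, I would assemble the reduced problem. Since $\mathcal{X} = \prod_i \mathcal{X}_i$, the minimization over $\boldsymbol{z}$ decouples across coordinates: for $i\notin\mathcal{I}$ the only surviving term is $\frac{\gamma}{2}|\boldsymbol{z}_i - \bar{\boldsymbol{z}}_i|^2$, minimized at $\boldsymbol{z}_i^{\dagger} = \bar{\boldsymbol{z}}_i$ (which lies in $\mathcal{X}_i$ because $\bar{\boldsymbol{z}}\in\mathcal{X}$), which accounts for the form $\boldsymbol{z}^{\dagger} = (\bar{\boldsymbol{z}}_{\mathcal{N}\backslash\mathcal{I}}, \boldsymbol{z}_{\mathcal{I}}^{\dagger})$; on the block $\boldsymbol{z}_{\mathcal{I}}$ the objective is exactly \eqref{nlmzp}. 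Pairing the minimizing $\boldsymbol{z}_{\mathcal{I}}^{\dagger}$ with the representer formula for $u^{\dagger}$ then completes the proof. Existence of a minimizer I would record at the outset, following from compactness of $\mathcal{X}$, coercivity in $u$, and continuity of the $f_i$.
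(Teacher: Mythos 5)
Your proposal is correct and follows essentially the same route as the paper's proof: partial minimization over $u$ via the Euler--Lagrange equation for the sampling operator $S_{\mathcal{I}}$, the representer formula from \eqref{eqre0}, and the collapse of the value function to the quadratic form in \eqref{nlmzp} via \eqref{bdImus} with parameter $\lambda M/\beta$. The only addition is your explicit treatment of the coordinates $i\notin\mathcal{I}$, which the paper leaves implicit; that detail is correct and harmless.
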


The above lemma establishes the solvability of \eqref{alg:1:upts} and implies that in each step of Algorithm \ref{alg:1},  the sizes of the matrices to be inverted depend only on the batch size $M$. Thus, the computational cost is reduced to $O(M^3)$ if we use the standard Cholesky decomposition. In numerical implementations of solving PDEs, an alternative way to deal with the PDE constraints in \eqref{nlGPvpI} at each step is to use the technique of eliminating variables (see Subsection 3.3 of \cite{chen2021solving} for details), which further reduces the dimensions of unknown variables. 

\begin{remark}
\label{rmknugget}
In numerical experiments, we replace the term $\lambda M I / \beta$ by a nugget $\eta \mathcal{R}$, where $\eta >0$ is a small real number and $\mathcal{R}$ is a block diagonal matrix formulated using the method described in Sec. 3.4 of \cite{chen2021solving}. Using block diagonal matrices lets us choose smaller nuggets and get more accurate results. 
\end{remark}

\section{The Convergence Analysis}
\label{secConverge}
In this section, we study the convergence of Algorithm \ref{alg:1}.  According to  \eqref{nluzprli},  at the $k^{\textit{th}}$ step of Algorithm \ref{alg:1}, $u^{k+1}$ is expressed as the linear combinations of $\mathcal{K}(\boldsymbol{\phi}_{\mathcal{I}})$, where $\boldsymbol{\phi}_{\mathcal{I}}$   only contains linear operators corresponding to a mini-batch, which is not expressive. Thus, at the final $K^{\textit{th}}$ step, we do not expect that $u^{K+1}$ approximates a minimizer of \eqref{nltlprob} very well. Instead, we study the accuracy of the sequence $(\boldsymbol{z}^k)_{k=1}^K$ in  Algorithm \ref{alg:1}, which contains the approximated values of the linear operators of the true solution, and we study the convergence of $(\hat{u}^{k^*}, \hat{\boldsymbol{z}}^{k^*})$ in \eqref{alg:1:fnop}, which uses the information of all the linear operators. 

We first summarize some basic notions of convex analysis in Subsection \ref{subsecPrere}. Then, we introduce the stability of Algorithm \ref{alg:1} in Subsection \ref{subsecStab}. The convergence analysis follows in Subsection \ref{subsecConv}. The arguments  are based on the framework of \cite{davis2019stochastic} and stability analysis \cite{bousquet2002stability}. Similar arguments are used in \cite{deng2021minibatch} to analyze the convergence of algorithms for general weakly convex optimization problems. 

\subsection{Prerequisites}
\label{subsecPrere}
For a non-empty, compact, and convex set  $\mathcal{C}\subset \mathbb{R}^d$ for some $d\in \mathbb{N}$, the normal cone to $\mathcal{C}$ at $\boldsymbol{x}\in \mathcal{C}$ is defined by $N_{\mathcal{C}}(\boldsymbol{x}) = \{\boldsymbol{y}\in \mathbb{R}^d| \boldsymbol{y}\cdot (\boldsymbol{c} - \boldsymbol{x})\leq 0, \forall \boldsymbol{c}\in \mathcal{C} \}$. 
Given $\gamma\geq 0$, a function $\omega: \mathbb{R}^d\mapsto\mathbb{R}$ is $\gamma$-weakly convex if the map $\boldsymbol{x}\mapsto \omega(\boldsymbol{x})+\frac{\gamma}{2}|\boldsymbol{x}|^2$ is convex and  $\omega$ is $\gamma$-strongly convex if $\boldsymbol{x}\mapsto \omega(\boldsymbol{x})-\frac{\gamma}{2}|\boldsymbol{x}|^2$ is convex. We recall that the proximal map of $\omega$ is given by
	$\operatorname{prox}_{\gamma \omega}(\boldsymbol{x})=\argmin_{\boldsymbol{y}}\Big\{\omega(\boldsymbol{y})+\frac{1}{2\gamma}|\boldsymbol{y}-\boldsymbol{x}|^2 \Big\}$,
and that the Moreau envelop \cite{moreau1965proximite} is defined as  
	$\omega_{\gamma}(\boldsymbol{x})=\min_{\boldsymbol{y}}\Big\{\omega(\boldsymbol{y})+\frac{1}{2\gamma}|\boldsymbol{y}-\boldsymbol{x}|^2 \Big\}$. 
Standard results (\cite{moreau1965proximite} and  \cite[Theorem 31.5] {rockafellar1970convex}) implies that  if $\omega$ is $\rho$-weakly convex and $\gamma < \rho^{-1}$, the envelope $\omega_{\gamma}$ is $C^1$ smooth with the gradient given by
	$\nabla \omega_{\gamma}(\boldsymbol{x})=\gamma^{-1}(\boldsymbol{x}-\operatorname{prox}_{\gamma \omega}(\boldsymbol{x}))$. 
Let $\hat{\boldsymbol{x}}=\operatorname{prox}_{\gamma \omega}(\boldsymbol{x})$. Then, the definition of the Moreau envelope implies that 
	$|\partial \omega(\hat{\boldsymbol{x}}) + N_{\mathcal{C}}(\hat{\boldsymbol{x}})| \leq |\nabla \omega_{\gamma}(\boldsymbol{x})|$. Hence, a point $\boldsymbol{x}$ with a small gradient $\nabla \omega_{\gamma}(\boldsymbol{x})$ lies in the neighbor of a nearly stationary point $\hat{\boldsymbol{x}}$. 

The following lemma establishes a crucial inequality for the convergence analysis.
\begin{lemma}
	\label{wc3plemma}
	Assume that $\vartheta(u, \boldsymbol{z})\in \mathcal{U}\times \mathcal{X}$ is $\gamma$-strongly convex in $u$ and $\varpi$-weakly convex in $\boldsymbol{z}$. Let $\tau>\varpi$ and
		$(\hat{u}, \hat{\boldsymbol{z}})=\argmin_{u\in \mathcal{U}, \boldsymbol{z}\in \mathcal{X}}\{\vartheta(u, \boldsymbol{z}) + \frac{\tau}{2}|\boldsymbol{z}-\Bar{\boldsymbol{z}}|^2\}$.
	Then, for any $(u, \boldsymbol{z})\in \mathcal{U}\times\mathcal{X}$, we have
	\begin{align}
		\label{wc3plemmafml}
		\vartheta(\hat{u}, \hat{\boldsymbol{z}}) + \frac{\tau}{2}|\hat{\boldsymbol{z}}-\Bar{\boldsymbol{z}}|^2 \leq \vartheta(u, \boldsymbol{z}) + \frac{\tau}{2}|\boldsymbol{z}-\Bar{\boldsymbol{z}}|^2 - \frac{\gamma}{2}\|u - \hat{u}\|^2_{\mathcal{U}} - \frac{\tau - \varpi}{2}|\boldsymbol{z} - \hat{\boldsymbol{z}}|^2. 
	\end{align}
\end{lemma}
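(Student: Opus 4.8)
The plan is to recast \eqref{wc3plemmafml} as a single quantitative strong-convexity estimate for the regularized objective $\Theta(u, \boldsymbol{z}) := \vartheta(u, \boldsymbol{z}) + \frac{\tau}{2}|\boldsymbol{z} - \Bar{\boldsymbol{z}}|^2$, of which $(\hat{u}, \hat{\boldsymbol{z}})$ is by definition a minimizer over the convex set $\mathcal{U}\times\mathcal{X}$. Indeed, \eqref{wc3plemmafml} is exactly
\[
\Theta(\hat{u}, \hat{\boldsymbol{z}}) \leq \Theta(u, \boldsymbol{z}) - \frac{\gamma}{2}\|u - \hat{u}\|_{\mathcal{U}}^2 - \frac{\tau - \varpi}{2}|\boldsymbol{z} - \hat{\boldsymbol{z}}|^2,
\]
so it suffices to show that $\Theta$ is jointly strongly convex with modulus $\gamma$ in the $u$-direction and modulus $\tau - \varpi > 0$ in the $\boldsymbol{z}$-direction, and then to combine this modulus with the minimality of $(\hat{u}, \hat{\boldsymbol{z}})$.

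The key step is upgrading the weak convexity in $\boldsymbol{z}$ to genuine strong convexity using the proximal term. By hypothesis the map $(u, \boldsymbol{z}) \mapsto \vartheta(u, \boldsymbol{z}) - \frac{\gamma}{2}\|u\|_{\mathcal{U}}^2 + \frac{\varpi}{2}|\boldsymbol{z}|^2$ is jointly convex. I would then complete the square in $\boldsymbol{z}$: since $\frac{\varpi}{2} + \frac{\tau - \varpi}{2} = \frac{\tau}{2}$ and $\frac{\tau}{2}|\boldsymbol{z} - \Bar{\boldsymbol{z}}|^2 - \frac{\tau}{2}|\boldsymbol{z}|^2 = -\tau\,\boldsymbol{z}\cdot\Bar{\boldsymbol{z}} + \frac{\tau}{2}|\Bar{\boldsymbol{z}}|^2$ is affine in $\boldsymbol{z}$, one checks that
\[
\Theta(u, \boldsymbol{z}) - \frac{\gamma}{2}\|u\|_{\mathcal{U}}^2 - \frac{\tau - \varpi}{2}|\boldsymbol{z}|^2 = \Big(\vartheta(u, \boldsymbol{z}) - \frac{\gamma}{2}\|u\|_{\mathcal{U}}^2 + \frac{\varpi}{2}|\boldsymbol{z}|^2\Big) + \big(\text{affine in } \boldsymbol{z}\big)
\]
is jointly convex. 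This says precisely that $\Theta$ is $\gamma$-strongly convex in $u$ and $(\tau-\varpi)$-strongly convex in $\boldsymbol{z}$ jointly; equivalently, for all $w_0 = (u_0, \boldsymbol{z}_0)$, $w_1 = (u_1, \boldsymbol{z}_1)$ in $\mathcal{U}\times\mathcal{X}$ and $t \in [0,1]$,
\[
\Theta\big((1-t)w_0 + t w_1\big) \leq (1-t)\Theta(w_0) + t\Theta(w_1) - \frac{t(1-t)}{2}\Big(\gamma\|u_1 - u_0\|_{\mathcal{U}}^2 + (\tau - \varpi)|\boldsymbol{z}_1 - \boldsymbol{z}_0|^2\Big).
\]

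I would then conclude by a convex-interpolation argument that deliberately avoids subgradient calculus in the infinite-dimensional space $\mathcal{U}$. Fix an arbitrary $(u, \boldsymbol{z}) \in \mathcal{U}\times\mathcal{X}$ and take $w_0 = (\hat{u}, \hat{\boldsymbol{z}})$, $w_1 = (u, \boldsymbol{z})$, so the segment $w_t = (1-t)w_0 + t w_1$ stays in the convex set $\mathcal{U}\times\mathcal{X}$. Using $\Theta(\hat{u}, \hat{\boldsymbol{z}}) \leq \Theta(w_t)$ by minimality in the two-point inequality, rearranging the $\Theta(\hat{u}, \hat{\boldsymbol{z}})$ terms and dividing by $t > 0$ gives $\Theta(\hat{u}, \hat{\boldsymbol{z}}) \leq \Theta(u, \boldsymbol{z}) - \frac{1-t}{2}\big(\gamma\|u - \hat{u}\|_{\mathcal{U}}^2 + (\tau-\varpi)|\boldsymbol{z} - \hat{\boldsymbol{z}}|^2\big)$; letting $t \to 0^+$ yields \eqref{wc3plemmafml}.

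The main subtlety I anticipate is conceptual rather than computational: one must read the mixed strong/weak convexity hypothesis as a \emph{joint} statement over the product space carrying two different norms (the quadratic Hilbertian norm $\|\cdot\|_{\mathcal{U}}$ and the Euclidean norm on $\boldsymbol{z}$), which is exactly what lets the completion-of-squares bookkeeping and the single interpolation bound go through uniformly. The infinite-dimensionality of $\mathcal{U}$ and the possible nonsmoothness of $\vartheta$ in $\boldsymbol{z}$ (inherited from the nonconvex $f_i$) are then harmless, since the segment-and-limit route uses only minimality along admissible directions and never invokes first-order optimality conditions or the differentiability of $\vartheta$. Existence of the minimizer is not an obstacle, as the $\gamma$-strong convexity in $u$ provides coercivity while $\mathcal{X}$ is compact, yielding a unique minimizer $(\hat{u}, \hat{\boldsymbol{z}})$.
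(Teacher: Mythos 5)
Your proof is correct, but it takes a genuinely different route from the paper's. The paper argues via first-order optimality: it writes $\boldsymbol{0}\in \partial\vartheta(\hat{u},\hat{\boldsymbol{z}})+\tau(0,\hat{\boldsymbol{z}}-\bar{\boldsymbol{z}})+(0,N_{\mathcal{X}}(\hat{\boldsymbol{z}}))$, uses the normal-cone inequality together with the three-point identity $\langle \hat{\boldsymbol{z}}-\bar{\boldsymbol{z}},\boldsymbol{z}-\hat{\boldsymbol{z}}\rangle=\frac12|\boldsymbol{z}-\bar{\boldsymbol{z}}|^2-\frac12|\hat{\boldsymbol{z}}-\bar{\boldsymbol{z}}|^2-\frac12|\boldsymbol{z}-\hat{\boldsymbol{z}}|^2$, and then lower-bounds the subgradient pairing using the strong/weak convexity of $\vartheta$. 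You instead fold the proximal term into the objective to show that $\Theta=\vartheta+\frac{\tau}{2}|\cdot-\bar{\boldsymbol{z}}|^2$ is jointly strongly convex with moduli $\gamma$ and $\tau-\varpi$ (your completion-of-squares bookkeeping checks out, since the leftover term is affine in $\boldsymbol{z}$), and then derive the quadratic growth bound at the minimizer by interpolating along the segment to $(u,\boldsymbol{z})$ and sending $t\to 0^+$. Both proofs rest on the same reading of the hypothesis as joint convexity of $\vartheta-\frac{\gamma}{2}\|u\|_{\mathcal{U}}^2+\frac{\varpi}{2}|\boldsymbol{z}|^2$; you make this explicit where the paper leaves it implicit in inequality \eqref{GMLinOptm2}. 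What your route buys is that it needs no subdifferential calculus at all --- no sum rule for the subdifferential of $\vartheta$ plus the indicator of $\mathcal{X}$, and no nonemptiness of $\partial\vartheta$ at the minimizer in the infinite-dimensional product space --- using only convexity along segments and the inner-product structure of $\|\cdot\|_{\mathcal{U}}$ (which the paper does assume, as the norm is quadratic). What the paper's route buys is brevity and alignment with the standard three-point lemma of the stochastic proximal point literature that the rest of Section \ref{secConverge} builds on.
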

\begin{proof}
	Let $(\hat{u}, \hat{\boldsymbol{z}})=\argmin_{u\in \mathcal{U}, \boldsymbol{z}\in \mathcal{X}}\{\vartheta(u, \boldsymbol{z}) + \frac{\tau}{2}|\boldsymbol{z}-\Bar{\boldsymbol{z}}|^2\}$ and $\hat{g}\in \partial\vartheta(\hat{u}, \hat{\boldsymbol{z}})$. Then,  $
		\boldsymbol{0}\in \partial\vartheta(\hat{u}, \hat{\boldsymbol{z}}) + \tau\begin{pmatrix}0\\ \hat{\boldsymbol{z}}-\bar{\boldsymbol{z}}\end{pmatrix}+\begin{pmatrix}0 \\ {N}_{\mathcal{X}}(\hat{\boldsymbol{z}})\end{pmatrix}$.
	Thus, for any $(u, \boldsymbol{z})\in \mathcal{U}\times\mathcal{X}$, we have
	\begin{align}
		\label{GMLinOpt1}
		\frac{1}{\tau}\bigg\langle \hat{g}, \begin{pmatrix}\hat{u}-u\\ \hat{\boldsymbol{z}}-\boldsymbol{z}\end{pmatrix}\bigg\rangle\leq&  \langle \hat{\boldsymbol{z}} - \bar{\boldsymbol{z}}, \boldsymbol{z} - \hat{\boldsymbol{z}}\rangle=   \frac{1}{2}|\boldsymbol{z}-\bar{\boldsymbol{z}}|^2-\frac{1}{2}|\hat{\boldsymbol{z}}-\bar{\boldsymbol{z}}|^2-\frac{1}{2}|\boldsymbol{z}-\hat{\boldsymbol{z}}|^2.
	\end{align}
	Since $\vartheta(\cdot, \cdot)$ is $\gamma$-strongly convex in $u$ and $\varpi$-weakly convex in $\boldsymbol{z}$, we have
	\begin{align}
		\label{GMLinOptm2}
		\bigg\langle \hat{g}, \begin{pmatrix}\hat{u}-u\\ \hat{\boldsymbol{z}}-\boldsymbol{z}\end{pmatrix}\bigg\rangle \geq \vartheta(\hat{u}, \hat{\boldsymbol{z}}) - \vartheta(u, \boldsymbol{z}) + \frac{\gamma}{2}\|u-\hat{u}\|_{\mathcal{U}}^2 - \frac{\varpi}{2}|\boldsymbol{z} - \hat{\boldsymbol{z}}|^2.  
	\end{align}
	Thus, combining \eqref{GMLinOpt1} and \eqref{GMLinOptm2}, we conclude \eqref{wc3plemmafml}. 
\end{proof}

\subsection{Stability}
\label{subsecStab}
In this subsection, we define the notion of stability for \eqref{alg:1:upts} in a general setting. Let ${\mathcal{I}} = \{\xi_1, \dots, \xi_{M}\}$ be a set of i.i.d. samples from ${\mathcal{N}}=\{1, \dots, N\}$, where $M, N \in \mathbb{N}$ and $M \leq N$. Let $i\in [1, M]$.  We replace $\xi_i$ with an i.i.d. copy $\xi_i'$ and define ${\mathcal{I}}_{(i)}=\{\xi_1, \dots, \xi_{i-1}, \xi'_{i}, \xi_{i+1}, \dots, \xi_M\}$. Denote ${\mathcal{I}}' = \{\xi'_1, \dots, \xi'_{M}\}$. Let $\xi$ be a random sample from $\mathcal{N}$, let $\psi(\cdot, \cdot; \xi):\mathcal{U}\times\mathcal{X}\mapsto\mathbb{R}$ be a stochastic model,  and let $\psi(\cdot, \cdot; {\mathcal{I}}) = \frac{1}{M}\sum_{i=1}^M\psi(\cdot, \cdot; \xi_i)$. Given $\gamma>0$, we define an operator $\widetilde{\operatorname{prox}}_{\psi/\gamma}^{\mathcal{I}}$ by
\begin{align}
	\label{defhprox}
	\widetilde{\operatorname{prox}}_{\psi/\gamma}^{\mathcal{I}} (\bar{\boldsymbol{z}})= \argmin_{u\in \mathcal{U}, \boldsymbol{z}\in \mathcal{X}}\psi(u, \boldsymbol{z}; {\mathcal{I}}) + \frac{\gamma}{2}|\boldsymbol{z}-\bar{\boldsymbol{z}}|^2, \forall \bar{\boldsymbol{z}}\in \mathcal{X}. 
\end{align}
Given $\bar{\boldsymbol{z}}\in \mathcal{X}$,  let $(\hat{u}_{\mathcal{I}}, \hat{\boldsymbol{z}}_{\mathcal{I}})=\widetilde{\operatorname{prox}}_{\psi/\gamma}^{\mathcal{I}}(\bar{\boldsymbol{z}})$. Then, we say that the operator $\widetilde{\operatorname{prox}}_{\psi/\gamma}^{\mathcal{I}}$ is $\epsilon$-stable for some $\epsilon>0$ if $|E_{{\mathcal{I}}, {\mathcal{I}}', i}[ \psi(\hat{u}_{{\mathcal{I}}}, \hat{\boldsymbol{z}}_{\mathcal{I}}; \xi_i') - \psi(\hat{u}_{{\mathcal{I}}_{(i)}}, \hat{\boldsymbol{z}}_{{\mathcal{I}}_{(i)}}; \xi_i') ]|\leq \epsilon$. The stability measures the accuracy of  $\widetilde{\operatorname{prox}}_{\psi/\gamma}^{\mathcal{I}}$ against the perturbation of one sample. 
The next lemma shows that if $\widetilde{\operatorname{prox}}_{\psi/\gamma}^{\mathcal{I}}$ is $\epsilon$-stable, the approximation of $E_{\xi}[\psi(\hat{u}_{\mathcal{I}}, \hat{\boldsymbol{z}}_{\mathcal{I}};\xi)]$ using a mini-batch is bounded by $\epsilon$ on expectation. The arguments are similar to those of Lemma 11 of \cite{bousquet2002stability} and Theorem A.3 of \cite{deng2021minibatch}.

\begin{lemma}
	\label{mmberdb}
	Let $\xi$ be a uniform sample from $\mathcal{N}$. For a stochastic model $\psi(\cdot, \cdot;\xi):\mathcal{U}\times\mathcal{X}\mapsto\mathbb{R}$, let $\widetilde{\operatorname{prox}}_{\psi/\gamma}^{\mathcal{I}}$ be as in \eqref{defhprox} given $\gamma>0$. 
	Suppose that $\widetilde{\operatorname{prox}}_{\psi/\gamma}^{\mathcal{I}}$ is $\epsilon$-stable. Let $(\hat{u}, \hat{\boldsymbol{z}}) = \widetilde{\operatorname{prox}}^{\mathcal{I}}_{\psi/\gamma}(\boldsymbol{z})$ for $\boldsymbol{z}\in \mathcal{X}$. Then, we have
	\begin{align}
		\label{l23eq00}
		|E_{{\mathcal{I}}}[\psi(\hat{u}, \hat{\boldsymbol{z}}; {\mathcal{I}}) - E_{\xi}[\psi(\hat{u}, \hat{\boldsymbol{z}}; \xi)]]|\leq \epsilon. 
	\end{align}
\end{lemma}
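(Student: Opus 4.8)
The plan is to recognize this as the classical uniform-stability-implies-generalization argument of Bousquet--Elisseeff, adapted here to the minibatch proximal setting. Throughout I write $(\hat{u}_{\mathcal{I}}, \hat{\boldsymbol{z}}_{\mathcal{I}}) = \widetilde{\operatorname{prox}}_{\psi/\gamma}^{\mathcal{I}}(\boldsymbol{z})$ to emphasize the dependence of the minimizer on the random batch $\mathcal{I}$, the fixed anchor $\boldsymbol{z}$ playing the role of $\bar{\boldsymbol{z}}$ in the stability definition. The quantity to control is the gap between the in-sample average $E_{\mathcal{I}}[\psi(\hat{u}_{\mathcal{I}}, \hat{\boldsymbol{z}}_{\mathcal{I}}; \mathcal{I})]$, in which the batch that produces the minimizer is reused to evaluate it, and the out-of-sample value $E_{\mathcal{I}}[E_{\xi}[\psi(\hat{u}_{\mathcal{I}}, \hat{\boldsymbol{z}}_{\mathcal{I}}; \xi)]]$, in which a fresh sample $\xi$ is used. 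The whole argument reduces this gap to exactly the stability quantity appearing in the definition of $\epsilon$-stability.

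First I would expand the in-sample term using $\psi(\cdot, \cdot; \mathcal{I}) = \frac{1}{M}\sum_{i=1}^M \psi(\cdot, \cdot; \xi_i)$ and interchange the finite sum with the expectation, harmlessly adjoining the independent copies $\mathcal{I}'$ that do not yet appear:
\[
E_{\mathcal{I}}[\psi(\hat{u}_{\mathcal{I}}, \hat{\boldsymbol{z}}_{\mathcal{I}}; \mathcal{I})] = \frac{1}{M}\sum_{i=1}^M E_{\mathcal{I},\mathcal{I}'}[\psi(\hat{u}_{\mathcal{I}}, \hat{\boldsymbol{z}}_{\mathcal{I}}; \xi_i)].
\]
The crux is the renaming step: since $\xi_i$ and $\xi_i'$ are i.i.d., the joint law of $(\mathcal{I},\mathcal{I}')$ is invariant under the transposition $\xi_i \leftrightarrow \xi_i'$, and that transposition carries $\mathcal{I}$ to $\mathcal{I}_{(i)}$ and the evaluation point $\xi_i$ to $\xi_i'$; because the minimizer depends on the batch only through the symmetric sum $\sum_i \psi(\cdot,\cdot;\xi_i)$, this gives
\[
E_{\mathcal{I},\mathcal{I}'}[\psi(\hat{u}_{\mathcal{I}}, \hat{\boldsymbol{z}}_{\mathcal{I}}; \xi_i)] = E_{\mathcal{I},\mathcal{I}'}[\psi(\hat{u}_{\mathcal{I}_{(i)}}, \hat{\boldsymbol{z}}_{\mathcal{I}_{(i)}}; \xi_i')].
\]

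Next I would handle the out-of-sample term. Because $\xi_i'$ is independent of $\mathcal{I}$ and shares the law of $\xi$, the inner expectation obeys $E_{\xi}[\psi(\hat{u}_{\mathcal{I}}, \hat{\boldsymbol{z}}_{\mathcal{I}}; \xi)] = E_{\xi_i'}[\psi(\hat{u}_{\mathcal{I}}, \hat{\boldsymbol{z}}_{\mathcal{I}}; \xi_i')]$ for every fixed $i$; averaging this $i$-independent identity over $i$ yields
\[
E_{\mathcal{I}}[E_{\xi}[\psi(\hat{u}_{\mathcal{I}}, \hat{\boldsymbol{z}}_{\mathcal{I}}; \xi)]] = \frac{1}{M}\sum_{i=1}^M E_{\mathcal{I},\mathcal{I}'}[\psi(\hat{u}_{\mathcal{I}}, \hat{\boldsymbol{z}}_{\mathcal{I}}; \xi_i')].
\]
Subtracting the two expressions and recognizing $\frac{1}{M}\sum_{i=1}^M E_{\mathcal{I},\mathcal{I}'}$ as $E_{\mathcal{I},\mathcal{I}',i}$ gives
\begin{align*}
&E_{\mathcal{I}}[\psi(\hat{u}_{\mathcal{I}}, \hat{\boldsymbol{z}}_{\mathcal{I}}; \mathcal{I})] - E_{\mathcal{I}}[E_{\xi}[\psi(\hat{u}_{\mathcal{I}}, \hat{\boldsymbol{z}}_{\mathcal{I}}; \xi)]] \\
&\qquad = -E_{\mathcal{I},\mathcal{I}',i}[\psi(\hat{u}_{\mathcal{I}}, \hat{\boldsymbol{z}}_{\mathcal{I}}; \xi_i') - \psi(\hat{u}_{\mathcal{I}_{(i)}}, \hat{\boldsymbol{z}}_{\mathcal{I}_{(i)}}; \xi_i')],
\end{align*}
whose modulus is at most $\epsilon$ by the definition of $\epsilon$-stability, which is the claim \eqref{l23eq00}.

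I expect the main obstacle to be making the renaming step airtight rather than any computation. One must verify that relabelling a single coordinate is a measure-preserving bijection of the sample space, and that the minimizer map $\mathcal{I} \mapsto (\hat{u}_{\mathcal{I}}, \hat{\boldsymbol{z}}_{\mathcal{I}})$ is genuinely symmetric in the batch indices, so that applying the transposition to the data transforms $(\hat{u}_{\mathcal{I}}, \hat{\boldsymbol{z}}_{\mathcal{I}})$ into $(\hat{u}_{\mathcal{I}_{(i)}}, \hat{\boldsymbol{z}}_{\mathcal{I}_{(i)}})$; this symmetry is exactly the invariance of the objective in \eqref{defhprox} under permutations of $\{\xi_1,\dots,\xi_M\}$. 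One must also ensure enough integrability to justify the interchange of the finite sum with the expectation and the use of Fubini, for which the compactness of $\mathcal{X}$ and the continuity of $\psi$ in its arguments suffice. Once this symmetry and integrability bookkeeping is in place, the conclusion is immediate.
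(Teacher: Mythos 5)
Your proposal is correct and follows essentially the same route as the paper's proof: both expand the in-sample term over the batch, use the measure-preserving swap $\xi_i \leftrightarrow \xi_i'$ to rewrite $E_{\mathcal{I}}[\psi(\hat{u}_{\mathcal{I}}, \hat{\boldsymbol{z}}_{\mathcal{I}}; \xi_i)]$ as $E_{\mathcal{I},\xi_i'}[\psi(\hat{u}_{\mathcal{I}_{(i)}}, \hat{\boldsymbol{z}}_{\mathcal{I}_{(i)}}; \xi_i')]$, rewrite the out-of-sample term via the fresh copies $\xi_i'$, and bound the resulting difference directly by the definition of $\epsilon$-stability. Your additional remarks on symmetry of the minimizer map and integrability are sound and only make explicit what the paper leaves implicit.
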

\begin{proof}
	Let ${\mathcal{I}}_{(i)}$ and  ${\mathcal{I}}'$ be defined as in the beginning of this subsection. 
	Given $\bar{\boldsymbol{z}}\in \mathcal{X}$, let
		$(\hat{u}, \hat{\boldsymbol{z}}) = \widetilde{\operatorname{prox}}_{\psi/\gamma}^{\mathcal{I}}(\bar{\boldsymbol{z}})$ and 
		$(\hat{u}_{(i)}, \hat{\boldsymbol{z}}_{(i)}) = \widetilde{\operatorname{prox}}_{\psi/\gamma}^{{\mathcal{I}}_{(i)}}(\bar{\boldsymbol{z}})$. 
	By the independence between ${\mathcal{I}}$ and ${\mathcal{I}}'$, we have 
	\begin{align}
		\label{l23eq0}
		E_{{\mathcal{I}}}[\psi(\hat{u}, \hat{\boldsymbol{z}}; {\mathcal{I}})] = \frac{1}{M}\sum_{i=1}^ME_{\mathcal{I}}[\psi(\hat{u}, \hat{\boldsymbol{z}}; \xi_i)]=\frac{1}{M}\sum_{i=1}^ME_{{\mathcal{I}},\xi_i'}[\psi(\hat{u}_{(i)}, \hat{\boldsymbol{z}}_{(i)}; \xi_i')].
	\end{align}
	Let $\xi$ and $\xi'$ be independent samples from ${\mathcal{N}}$. Then,   $E_\xi[\psi(\hat{u}, \hat{\boldsymbol{z}}; \xi)] = E_{\xi'}[\psi(\hat{u}, \hat{\boldsymbol{z}}; \xi')] = \frac{1}{M}\sum_{i=1}^ME_{\xi_i'}[\psi(\hat{u}, \hat{\boldsymbol{z}}; \xi_i')]$.  
	Thus,  the prior equation and  \eqref{l23eq0} yield
	\begin{align}
		|E_{{\mathcal{I}}}[\psi(\hat{u}, \hat{\boldsymbol{z}}; {\mathcal{I}}) - E_{\xi}[\psi(\hat{u}, \hat{\boldsymbol{z}}; \xi)] ]|
		=& \bigg|\frac{1}{M}\sum_{i=1}^ME_{{\mathcal{I}}, \xi'_i}[\psi(\hat{u}_{(i)}, \hat{\boldsymbol{z}}_{(i)};\xi_i') - \psi(\hat{u}, \hat{\boldsymbol{z}}; \xi_i')]\bigg|\nonumber\\
		=& |E_{{\mathcal{I}}, {\mathcal{I}}', i}[\psi(\hat{u}_{(i)}, \hat{\boldsymbol{z}}_{(i)}; \xi_i') - \psi(\hat{u}, \hat{\boldsymbol{z}}; \xi_i')]\bigg|\leq \epsilon, \nonumber
	\end{align}
	where the last inequality results from the stability of $\widetilde{\operatorname{prox}}_{\psi/\gamma}^{\mathcal{I}}$. Hence, \eqref{l23eq00}  holds. 
\end{proof}
\subsection{Convergence}
\label{subsecConv}
In this subsection, we study the convergence of Algorithm \ref{alg:1}.  We first assume that all the linear operators in \eqref{nldefvph} are  bounded. 
\begin{hyp}
	\label{hypnlphibd}
	Let $\{\boldsymbol{\phi}_i\}_{i=1}^N$  be the linear operators in \eqref{nldefvph}. Assume that there exists a constant $L>0$ such that  $\|\boldsymbol{\phi}_i\|_{\mathcal{U}^*}\leq L, \forall i\in {1, \dots, N}$. 
\end{hyp}
\begin{remark}
	The boundedness imposed on the linear operators $\{\boldsymbol{\phi}_i\}$ is not restrictive if the kernel $\kappa$ associated with the space $\mathcal{U}$ is smooth enough. Indeed, let {\color{black} $\phi=\delta_{\boldsymbol{x}}\circ \Delta$}, where {\color{black}$\boldsymbol{x}\in \Omega$}. Then, there exists a constant $C$ such that
	{\color{black}
	\begin{align*}
		\|\phi\|_{\mathcal{U}^*} = \sup_{u\in \mathcal{U}}\frac{[\phi, u]}{\|u\|_{\mathcal{U}}} = \sup_{\Delta u\in \mathcal{U}}\frac{u(\boldsymbol{x})}{\|u\|_{\mathcal{U}}} = \sup_{u\in \mathcal{U}}\frac{\langle u, \Delta \kappa(\cdot, \boldsymbol{x})\rangle}{\|u\|_{\mathcal{U}}}\leq \|\Delta\kappa(\cdot, \boldsymbol{x})\|_{\mathcal{U}}\leq C,
	\end{align*}
}where we use the reproducing property in the third equality, the Cauchy--Schwarz inequality in the first inequality, and the {\color{black}regularity} of the kernel in the final inequality. Similar arguments hold for $\phi$ to be {\color{black}other} differential operators. 
\end{remark}

Furthermore, we impose the following assumption on the regularity of $f_i$ in $\mathcal{X}$. 
\begin{hyp}
	\label{hypf}
	Let $\{f_i\}_{i=1}^N$ be functions in \eqref{nldefvph}. 
	For any $i\in \{1,\dots,N\}$, we assume that $f_i$ is continuous differentiable. Moreover, there exist constants $U_f$ and $\mathcal{H}_f$ such that $\sup_{\boldsymbol{x}\in \mathcal{X}}|f_i(\boldsymbol{x})-{y}_i|\leq U_f$ for $i \in \{1, \dots, N\}$ and 
	\begin{align}
		\label{condfi}
		|f_i(\boldsymbol{w}) - f_i(\boldsymbol{x}) - \langle \nabla f_i(\boldsymbol{x}), \boldsymbol{w} - \boldsymbol{x}\rangle | \leq \frac{\mathcal{H}_f}{2}|\boldsymbol{w}-\boldsymbol{x}|^2, \forall \boldsymbol{x}, \boldsymbol{w}\in \mathcal{X}.   
	\end{align}
\end{hyp}

With the boundedness of the second derivative of $f_i$ at hand, we are able to show that $\varphi(\cdot, \cdot)$ in \eqref{nldefvph} and $\varphi(\cdot, \cdot; \mathcal{I})$ in \eqref{nlGPvpI} are weakly convex. The proof of the following lemma is similar to the argument of Lemma 4.2 of \cite{drusvyatskiy2019efficiency}. Recall that $\mathcal{X}$ is a convex compact set and is the product space of a set of convex compact sets $\{\mathcal{X}_i\}_{i=1}^N$.  
\begin{lemma}
	\label{wcvxt}
	Let $\{f_i\}_{i=1}^N$ be the collection of functions in \eqref{nldefvph}. 
	Suppose that Assumption \ref{hypf} holds. Let $U_f$ and $\mathcal{H}_f$ be the coefficients in Assumption \ref{hypf}, and let $\mu=U_f\mathcal{H}_f$. For $i\in \mathcal{N}$, define $h_i(\boldsymbol{z}_i)=\frac{1}{2}|f_i(\boldsymbol{z}_i)-{y}_i|^2$ for $\boldsymbol{z}_i\in \mathcal{X}_i$. Then, $h_i$ is $\mu$-weakly convex in $\boldsymbol{z}$. Moreover, for any set $\mathcal{I}\subset \mathcal{N}$ with $M$ indices, let $h(\boldsymbol{z}) = \frac{1}{M}\sum_{i\in \mathcal{I}}h_i(\boldsymbol{z}_i)$ for $\boldsymbol{z}=(\boldsymbol{z}_1,\dots,\boldsymbol{z}_N)\in\mathcal{X}$. Then, $h$ is $\mu$-weakly convex in $\boldsymbol{z}$. 
\end{lemma}
\begin{proof}
	For $i\in \mathcal{N}$, let $\boldsymbol{z}_i\in \mathcal{X}_i$ and $\boldsymbol{w}_i\in \mathcal{X}_i$.  Using Assumption \ref{hypf}, we obtain
	\begin{align}
		h_i(\boldsymbol{z}_i) =& \frac{1}{2}|f_i(\boldsymbol{z}_i) - {y}_i|^2 \geq \frac{1}{2}|f_i(\boldsymbol{w}_i) - {y}_i|^2 + \langle f_i(\boldsymbol{w}_i)-{y}_i, f_i(\boldsymbol{z}_i )- f_i(\boldsymbol{w}_i)\rangle\nonumber \\
		=& \frac{1}{2}|f_i(\boldsymbol{w}_i) - {y}_i|^2 + \langle f_i(\boldsymbol{w}_i) - {y}_i, \nabla f_i(\boldsymbol{w}_i)(\boldsymbol{z}_i - \boldsymbol{w}_i)\rangle \nonumber\\
		& + \langle f_i(\boldsymbol{w}_i) - {y}_i, f(\boldsymbol{z}_i) - f_i(\boldsymbol{w}_i) - \nabla f_i(\boldsymbol{w}_i)(\boldsymbol{z}_i - \boldsymbol{w}_i)\rangle\nonumber\\
		\geq & h_i(\boldsymbol{w}_i) + \langle (\nabla f_i(\boldsymbol{w}_i))^T(f_i(\boldsymbol{w}_i) - {y}_i), \boldsymbol{z}_i - \boldsymbol{w}_i\rangle - \frac{\mu}{2}|\boldsymbol{z}_i - \boldsymbol{w}_i|^2,\nonumber
	\end{align}
	which implies that $h_i$ is $\mu$-weakly convex. Thus, for $\boldsymbol{z}, \boldsymbol{w}\in \mathcal{X}$, we have
	\begin{align}
		h(\boldsymbol{z})  \geq & \frac{1}{M}\sum_{i\in \mathcal{I}} h_i(\boldsymbol{w}_i) + \langle (\nabla f_i(\boldsymbol{w}_i))^T(f_i(\boldsymbol{w}_i) - {y}_i), \boldsymbol{z}_i - \boldsymbol{w}_i\rangle - \frac{\mu}{2}|\boldsymbol{z}_i - \boldsymbol{w}_i|^2 \nonumber\\
		\geq & h(\boldsymbol{w}) + \langle \nabla h(\boldsymbol{w}), \boldsymbol{z} - \boldsymbol{w}\rangle - \frac{\mu}{2}|\boldsymbol{z}-\boldsymbol{w}|^2. \nonumber
	\end{align}
	Hence, $h$ is also $\mu$-weakly convex. 
\end{proof}

Lemma \ref{wcvxt} directly implies the weakly convexity of $\varphi(\cdot, \cdot; \mathcal{I})$ defined in \eqref{nlGPvpI}, which is described in the following corollary. 
\begin{corollary}
	\label{corovarphicv}
	Suppose that Assumption \ref{hypf} holds. Let $\mathcal{I}\subset \mathcal{N}$ and let $\mu$ be the constant in Lemma \ref{wcvxt}. Then, $\varphi(\cdot, \cdot; \mathcal{I})$ defined in \eqref{nlGPvpI} is $\lambda$-strongly convex in $u$ and $\mu$-weakly convex in $\boldsymbol{z}$. 
\end{corollary}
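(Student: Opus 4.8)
The plan is to decompose $\varphi(\cdot,\cdot;\mathcal{I})$ into three pieces and track the convexity of each piece separately in $u$ and in $\boldsymbol{z}$, then invoke the elementary fact that adding a convex function preserves both strong and weak convexity. Concretely, I would write
\begin{align*}
\varphi(u,\boldsymbol{z};\mathcal{I}) = \frac{\lambda}{2}\|u\|_{\mathcal{U}}^2 + \frac{\beta}{2M}\sum_{i\in\mathcal{I}}|[\boldsymbol{\phi}_i,u]-\boldsymbol{z}_i|^2 + h(\boldsymbol{z}),
\end{align*}
where $h(\boldsymbol{z})=\frac{1}{2M}\sum_{i\in\mathcal{I}}|f_i(\boldsymbol{z}_i)-y_i|^2$ is exactly the function handled in Lemma \ref{wcvxt}. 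The middle coupling term is the crucial common piece: for each $i$ the map $u\mapsto[\boldsymbol{\phi}_i,u]-\boldsymbol{z}_i$ is affine in $u$ (with $\boldsymbol{z}$ fixed) and the map $\boldsymbol{z}_i\mapsto[\boldsymbol{\phi}_i,u]-\boldsymbol{z}_i$ is affine in $\boldsymbol{z}$ (with $u$ fixed), so composing with the convex function $t\mapsto|t|^2$ and summing shows this term is convex in $u$ for fixed $\boldsymbol{z}$, and convex in $\boldsymbol{z}$ for fixed $u$.

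For strong convexity in $u$, I would fix $\boldsymbol{z}\in\mathcal{X}$ and observe that $\frac{\lambda}{2}\|u\|_{\mathcal{U}}^2$ is $\lambda$-strongly convex in $u$ because $\|\cdot\|_{\mathcal{U}}$ is a quadratic norm (so $u\mapsto\frac{\lambda}{2}\|u\|_{\mathcal{U}}^2-\frac{\lambda}{2}\|u\|_{\mathcal{U}}^2\equiv 0$ is trivially convex), that the coupling term is convex in $u$ by the argument above, and that $h(\boldsymbol{z})$ is constant in $u$. Since subtracting $\frac{\lambda}{2}\|u\|_{\mathcal{U}}^2$ from $\varphi(\cdot,\boldsymbol{z};\mathcal{I})$ leaves the sum of two convex functions, $u\mapsto\varphi(u,\boldsymbol{z};\mathcal{I})$ is $\lambda$-strongly convex.

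For weak convexity in $\boldsymbol{z}$, I would fix $u\in\mathcal{U}$: the first term is now constant in $\boldsymbol{z}$, the coupling term is convex (hence $0$-weakly convex) in $\boldsymbol{z}$, and $h$ is $\mu$-weakly convex in $\boldsymbol{z}$ by Lemma \ref{wcvxt}. Adding $\frac{\mu}{2}|\boldsymbol{z}|^2$ to $\varphi(u,\cdot;\mathcal{I})$ turns $h$ into a convex function while leaving the other (convex) terms convex, so the total is convex and $\boldsymbol{z}\mapsto\varphi(u,\boldsymbol{z};\mathcal{I})$ is $\mu$-weakly convex on $\mathcal{X}$.

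I do not expect any serious obstacle here, since the substantive estimate is already contained in Lemma \ref{wcvxt}; the statement is essentially a bookkeeping corollary. The only two points requiring a little care are (i) interpreting strong convexity in the infinite-dimensional variable $u$ through the quadratic Hilbert-type norm $\|\cdot\|_{\mathcal{U}}$ rather than the finite-dimensional definition stated in Subsection \ref{subsecPrere}, and (ii) confirming that the coupling term is convex in each variable separately, which follows cleanly from its representation as the square of an affine map. Both are routine, so the proof should reduce to stating the decomposition and citing Lemma \ref{wcvxt}.
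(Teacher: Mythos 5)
Your proposal is correct and follows essentially the same route as the paper, whose proof is a one-line appeal to Assumption \ref{hypf}, Lemma \ref{wcvxt}, and the evident $\lambda$-strong convexity in $u$; you have simply written out the bookkeeping (the decomposition into the quadratic term, the square-of-affine coupling term, and $h$) that the paper leaves implicit. No issues.
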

\begin{proof}
	The claim follows from Assumption \ref{hypf}, Lemma \ref{wcvxt}, and the strongly convexity of $\varphi(\cdot, \cdot;\mathcal{I})$ in $u$.  
\end{proof}

Next, we use the stability analysis and the weakly convexity property to study the convergence of Algorithm \ref{alg:1}. The following lemma establishes the stability for each iteration of Algorithm \ref{alg:1}.
The proof appears in Appendix \ref{secAppendix}. 

\begin{lemma}
	\label{nlstles}
	Suppose that Assumptions \ref{hypnlphibd} and \ref{hypf} hold. Let $\mathcal{I} = \{\xi_1, \dots, \xi_{M}\}$ be the i.i.d. samples from $\mathcal{N}=\{1, \dots, N\}$, where $M, N \in \mathbb{N}$ and $M \leq N$. Given $\bar{\boldsymbol{z}}\in \mathcal{X}$ and $\gamma>\mu$, where $\mu$ is the constant in Lemma \ref{wcvxt}, we define
	\begin{align} 
		\label{uhzhmn}
		(\hat{u}, \hat{\boldsymbol{z}})=\widetilde{\operatorname{prox}}_{\varphi/\gamma}^{\mathcal{I}}(\bar{\boldsymbol{z}})=\argmin_{u\in \mathcal{U}, \boldsymbol{z}\in \mathcal{X}}\varphi(u, \boldsymbol{z}; {\mathcal{I}}) + \frac{\gamma}{2}|\boldsymbol{z} - \bar{\boldsymbol{z}}|^2.
	\end{align}
	Let $r$ be the diameter of $\mathcal{X}$ and let $U_f$ be given in Assumption \ref{hypf}. Then, there exists a constant $C$ depending on $\beta, r, \lambda, L$, and $U_f$,  such that  $\widetilde{\operatorname{prox}}_{\varphi/\gamma}^{\mathcal{I}}$ is $\epsilon$-stable with 
	\begin{align}
		\label{nonepsdef}
		\epsilon = \frac{C}{M\min\{\lambda, \gamma-\mu\}}.
	\end{align}
\end{lemma}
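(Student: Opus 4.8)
The plan is to follow the algorithmic-stability strategy of Bousquet--Elisseeff, using the strongly/weakly convex comparison inequality of Lemma \ref{wc3plemma} as the engine. Write $(\hat u,\hat{\boldsymbol z})=\widetilde{\operatorname{prox}}_{\varphi/\gamma}^{\mathcal I}(\bar{\boldsymbol z})$ and $(\hat u',\hat{\boldsymbol z}')=\widetilde{\operatorname{prox}}_{\varphi/\gamma}^{\mathcal I_{(i)}}(\bar{\boldsymbol z})$, where $\mathcal I$ and $\mathcal I_{(i)}$ differ only in the $i$-th sample ($\xi_i$ versus $\xi_i'$). By Corollary \ref{corovarphicv}, $\varphi(\cdot,\cdot;\mathcal I)$ is $\lambda$-strongly convex in $u$ and $\mu$-weakly convex in $\boldsymbol z$, so with $\tau=\gamma>\mu$ Lemma \ref{wc3plemma} applies to both proximal problems. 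Applying it to the $\mathcal I$-problem with test point $(\hat u',\hat{\boldsymbol z}')$, and to the $\mathcal I_{(i)}$-problem with test point $(\hat u,\hat{\boldsymbol z})$, and adding the two inequalities, the prox terms $\tfrac\gamma2|\cdot-\bar{\boldsymbol z}|^2$ cancel and I obtain
\begin{align*}
	\lambda\|\hat u-\hat u'\|_{\mathcal U}^2+(\gamma-\mu)|\hat{\boldsymbol z}-\hat{\boldsymbol z}'|^2\le \big[\varphi(\hat u',\hat{\boldsymbol z}';\mathcal I)-\varphi(\hat u',\hat{\boldsymbol z}';\mathcal I_{(i)})\big]+\big[\varphi(\hat u,\hat{\boldsymbol z};\mathcal I_{(i)})-\varphi(\hat u,\hat{\boldsymbol z};\mathcal I)\big].
\end{align*}
Since $\mathcal I$ and $\mathcal I_{(i)}$ agree except at one index, the right-hand side equals $\tfrac1M$ times a combination of the single-sample functionals $g_\xi(u,\boldsymbol z):=\tfrac\beta2|[\boldsymbol\phi_\xi,u]-\boldsymbol z_\xi|^2+\tfrac12|f_\xi(\boldsymbol z_\xi)-y_\xi|^2$ evaluated at the two minimizers.

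Next I would establish the a priori bounds needed to turn the right-hand side into a Lipschitz estimate. Comparing $(\hat u,\hat{\boldsymbol z})$ with the feasible point $(0,\bar{\boldsymbol z})$ gives $\tfrac\lambda2\|\hat u\|_{\mathcal U}^2\le\varphi(0,\bar{\boldsymbol z};\mathcal I)\le\tfrac{\beta r^2}{2}+\tfrac{U_f^2}{2}$, using $0\in\mathcal X$, the diameter bound $|\bar{\boldsymbol z}|\le r$, and $|f_i-y_i|\le U_f$ from Assumption \ref{hypf}; hence $\|\hat u\|_{\mathcal U}$ (and likewise $\|\hat u'\|_{\mathcal U}$) is bounded by a constant depending only on $\beta,r,U_f,\lambda$, independent of $\gamma$ and $M$. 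On the resulting bounded region, using $\|\boldsymbol\phi_\xi\|_{\mathcal U^*}\le L$ from Assumption \ref{hypnlphibd} to control $|[\boldsymbol\phi_\xi,u]|\le L\|u\|_{\mathcal U}$, together with the continuity of $\nabla f_\xi$ on the compact set $\mathcal X$ and $|f_\xi-y_\xi|\le U_f$, each $g_\xi$ is $G$-Lipschitz in $(u,\boldsymbol z)$ with $G=G(\beta,r,\lambda,L,U_f)$. Plugging this into the displayed inequality yields $\min\{\lambda,\gamma-\mu\}(\|\hat u-\hat u'\|_{\mathcal U}^2+|\hat{\boldsymbol z}-\hat{\boldsymbol z}'|^2)\le\tfrac{2G}{M}(\|\hat u-\hat u'\|_{\mathcal U}+|\hat{\boldsymbol z}-\hat{\boldsymbol z}'|)$, and dividing through gives the key perturbation estimate $\|\hat u-\hat u'\|_{\mathcal U}+|\hat{\boldsymbol z}-\hat{\boldsymbol z}'|\le\tfrac{C}{M\min\{\lambda,\gamma-\mu\}}$.

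Finally I would close the argument by bounding the stability quantity itself. The single-sample objective $\varphi(\cdot,\cdot;\xi_i')=\tfrac\lambda2\|u\|_{\mathcal U}^2+g_{\xi_i'}(u,\boldsymbol z)$ is also Lipschitz on the bounded region (the extra quadratic contributes $\lambda\|\hat u\|_{\mathcal U}$ to the constant), so
\begin{align*}
	\big|\varphi(\hat u,\hat{\boldsymbol z};\xi_i')-\varphi(\hat u',\hat{\boldsymbol z}';\xi_i')\big|\le C'\big(\|\hat u-\hat u'\|_{\mathcal U}+|\hat{\boldsymbol z}-\hat{\boldsymbol z}'|\big)\le\frac{C}{M\min\{\lambda,\gamma-\mu\}}.
\end{align*}
Because this is a deterministic bound uniform over all realizations of $\mathcal I,\mathcal I',i$, taking $E_{\mathcal I,\mathcal I',i}[\cdot]$ and using $|E[X]|\le E|X|$ gives exactly the claimed $\epsilon=\tfrac{C}{M\min\{\lambda,\gamma-\mu\}}$. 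The main obstacle is the bookkeeping in the Lipschitz estimates: I must make sure the a priori bound on $\|\hat u\|_{\mathcal U}$ and hence the constants $G,C',C$ depend only on $\beta,r,\lambda,L,U_f$ and not on the prox parameter $\gamma$ or the batch size $M$, since the whole point is that the $1/M$ and $1/\min\{\lambda,\gamma-\mu\}$ factors are isolated; controlling the infinite-dimensional component $u$ through the dual-norm bound of Assumption \ref{hypnlphibd} rather than through compactness is the one genuinely non-routine point.
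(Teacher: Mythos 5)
Your proposal is correct and follows essentially the same route as the paper's proof in Appendix \ref{secAppendix}: the symmetric application of Lemma \ref{wc3plemma} (via Corollary \ref{corovarphicv}) to the two proximal problems, the a priori bound $\frac{\lambda}{2}\|\hat u\|_{\mathcal U}^2\le\varphi(0,\bar{\boldsymbol z};\mathcal I)$, the Lipschitz estimate for the single-sample objective on the resulting bounded set, and the final expectation step are all exactly the paper's \eqref{vpnluhuhi:eq0}--\eqref{vpnluhuhi:eq1}, \eqref{bduhnl}, \eqref{vplinnls}, and \eqref{stbcmp}. The one point you flag as non-routine --- ensuring the constants depend only on $\beta,r,\lambda,L,U_f$ and not on $\gamma$ or $M$ --- is handled the same way in the paper, so there is nothing to add.
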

{\color{black}
\begin{remark}
\label{rmk:stcerr}
Lemma \ref{nlstles} demonstrates the critical distinction between the mini-batch method delineated in this paper and those explored in \cite{duchi2018stochastic, davis2019stochastic, deng2021minibatch}. In our approach, the regularization parameter $\lambda$ is typically small, rendering $\min\{\lambda, \gamma-\mu\}$ equivalent to $\lambda$, a fixed parameter that is not dependent on the step size. This aspect is  significantly different from the methodologies in \cite{duchi2018stochastic, davis2019stochastic, deng2021minibatch}, where $\gamma$ increasing with the number of iterations enables a reduction in statistical error. A notable difference in our method is the omission of a proximal step over $u$ in \eqref{alg:1:upts}. Incorporating the proximal step over $u$ would necessitate the retention of all mini-batch samples and the dependency of the minimizer $u^k$ at the $k^{\textit{th}}$ step on the formulations of preceding minimizers, $u^1,\dots,u^{k-1}$, thereby substantially elevating memory requirements. Addressing the challenge of reducing statistical error while managing memory efficiency is a subject for future research.
\end{remark}
}

Next, we study the descent property of Algorithm \ref{alg:1}. Henceforth, we use the notation $E_k[\cdot]$ to represent the expectation conditioned on all the randomness up to the $k^{\textit{th}}$ step in Algorithm \ref{alg:1}. The proof appears in Appendix \ref{secAppendix}. 
\begin{pro}
	\label{pronlmid}
	Let $(u^k, \boldsymbol{z}^k)_{k=1}^K$ be the sequence generated by Algorithm \ref{alg:1} with step sizes $\{\gamma_k\}_{k=1}^K$, where $\gamma_k>\mu$ and $\mu$ is  given in Lemma \ref{wcvxt}. 
	For $\rho>0$, let 
	\begin{align}
		\label{nluhzh}
		(\hat{u}^k, \hat{\boldsymbol{z}}^k)=\argmin_{u\in \mathcal{U}, \boldsymbol{z}\in \mathcal{X}}{\varphi(u, \boldsymbol{z})}+\frac{\rho}{2}|\boldsymbol{z}-\boldsymbol{z}^k|^2.
	\end{align}
	Define
	\begin{align}
		\label{nlprxvhgm}
		\varphi_{1/\rho}(\Bar{\boldsymbol{z}})=\min_{u\in \mathcal{U}, \boldsymbol{z}\in \mathcal{X}}\varphi(u, \boldsymbol{z}) + \frac{\rho}{2}|\boldsymbol{z} - \bar{\boldsymbol{z}}|^2. 
	\end{align}
	Then, we get
	\begin{align}
		\label{nllteq}
		&\frac{\rho(\rho-\mu)}{2(\gamma_k - \mu)}|\hat{\boldsymbol{z}}^k - \boldsymbol{z}^k|^2 + \frac{\rho(\gamma_k - \rho)}{2(\gamma_k - \mu)}E_k[|\boldsymbol{z}^{k+1} - \boldsymbol{z}^k|^2]\nonumber\\
		\leq& \varphi_{1/\rho}( \hat{\boldsymbol{z}}^k) - E_k[\varphi_{1/\rho}(\boldsymbol{z}^{k+1})] + \frac{\rho\epsilon_k}{\gamma_k - \mu},
	\end{align}
	where $\epsilon_k$ is given as in \eqref{nonepsdef} with $\gamma:=\gamma_k$. 
\end{pro}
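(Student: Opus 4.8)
The plan is to combine two applications of the three-point inequality from Lemma \ref{wc3plemma} with the stability estimate of Lemma \ref{nlstles}, and then feed the outcome into a one-step descent bound for the Moreau envelope $\varphi_{1/\rho}$. Throughout I write $E_k$ for the conditional expectation given the history before the $k^{\textit{th}}$ step, so that $\boldsymbol{z}^k$ and the proximal point $(\hat{u}^k,\hat{\boldsymbol{z}}^k)$ of \eqref{nluhzh} are deterministic under $E_k$, while the mini-batch $\mathcal{I}_k$ (hence $(u^{k+1},\boldsymbol{z}^{k+1})$) is random. By Corollary \ref{corovarphicv}, taking $\mathcal{I}=\mathcal{N}$ shows that both $\varphi(\cdot,\cdot;\mathcal{I}_k)$ and the full objective $\varphi$ in \eqref{nldefvph} are $\lambda$-strongly convex in $u$ and $\mu$-weakly convex in $\boldsymbol{z}$; since $\rho>\mu$ is needed for $\varphi_{1/\rho}$ in \eqref{nlprxvhgm} to be well defined, Lemma \ref{wc3plemma} applies in both cases.

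First I would apply Lemma \ref{wc3plemma} to $\varphi(\cdot,\cdot;\mathcal{I}_k)$ with $\tau=\gamma_k$, base point $\boldsymbol{z}^k$, minimizer $(u^{k+1},\boldsymbol{z}^{k+1})$ and test point $(\hat{u}^k,\hat{\boldsymbol{z}}^k)$, and then a second time to the full objective $\varphi$ with $\tau=\rho$, base point $\boldsymbol{z}^k$, minimizer $(\hat{u}^k,\hat{\boldsymbol{z}}^k)$ and test point $(u^{k+1},\boldsymbol{z}^{k+1})$. Dropping the two nonnegative $\tfrac{\lambda}{2}\|\cdot\|_{\mathcal{U}}^2$ terms, the first inequality involves the mini-batch values $\varphi(u^{k+1},\boldsymbol{z}^{k+1};\mathcal{I}_k)$ and $\varphi(\hat{u}^k,\hat{\boldsymbol{z}}^k;\mathcal{I}_k)$. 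The key step is to take $E_k$ and pass from these to $\varphi$: unbiasedness of the sampling gives $E_k[\varphi(\hat{u}^k,\hat{\boldsymbol{z}}^k;\mathcal{I}_k)]=\varphi(\hat{u}^k,\hat{\boldsymbol{z}}^k)$ for the deterministic point, while for the random minimizer $(u^{k+1},\boldsymbol{z}^{k+1})=\widetilde{\operatorname{prox}}_{\varphi/\gamma_k}^{\mathcal{I}_k}(\boldsymbol{z}^k)$ the $\epsilon_k$-stability of Lemma \ref{nlstles} together with Lemma \ref{mmberdb} yields $E_k[\varphi(u^{k+1},\boldsymbol{z}^{k+1};\mathcal{I}_k)]\ge E_k[\varphi(u^{k+1},\boldsymbol{z}^{k+1})]-\epsilon_k$, with $\epsilon_k$ as in \eqref{nonepsdef}. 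Adding the two resulting inequalities cancels $E_k[\varphi(u^{k+1},\boldsymbol{z}^{k+1})]$ and $\varphi(\hat{u}^k,\hat{\boldsymbol{z}}^k)$ and leaves the purely quadratic estimate
\begin{align*}
\frac{\gamma_k-\rho}{2}E_k[|\boldsymbol{z}^{k+1}-\boldsymbol{z}^k|^2] + \frac{\gamma_k+\rho-2\mu}{2}E_k[|\hat{\boldsymbol{z}}^k-\boldsymbol{z}^{k+1}|^2] - \frac{\gamma_k-\rho}{2}|\hat{\boldsymbol{z}}^k-\boldsymbol{z}^k|^2 \le \epsilon_k.
\end{align*}
Because $\rho>\mu$, the coefficient $\tfrac{\gamma_k+\rho-2\mu}{2}$ may be lowered to $\tfrac{\gamma_k-\mu}{2}$, which furnishes the (slightly weaker) bound actually needed below.

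The second ingredient is a descent estimate for $\varphi_{1/\rho}$ obtained by inserting the proximal point of $\boldsymbol{z}^k$ as a competitor in \eqref{nlprxvhgm} at $\boldsymbol{z}^{k+1}$: since $(\hat{u}^k,\hat{\boldsymbol{z}}^k)$ minimizes $\varphi(u,\boldsymbol{z})+\tfrac{\rho}{2}|\boldsymbol{z}-\boldsymbol{z}^k|^2$, one has $\varphi_{1/\rho}(\boldsymbol{z}^{k+1})\le\varphi(\hat{u}^k,\hat{\boldsymbol{z}}^k)+\tfrac{\rho}{2}|\hat{\boldsymbol{z}}^k-\boldsymbol{z}^{k+1}|^2$ together with the identity $\varphi_{1/\rho}(\boldsymbol{z}^k)=\varphi(\hat{u}^k,\hat{\boldsymbol{z}}^k)+\tfrac{\rho}{2}|\hat{\boldsymbol{z}}^k-\boldsymbol{z}^k|^2$. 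Subtracting and taking $E_k$ gives
\begin{align*}
\varphi_{1/\rho}(\boldsymbol{z}^k)-E_k[\varphi_{1/\rho}(\boldsymbol{z}^{k+1})] \ge \frac{\rho}{2}|\hat{\boldsymbol{z}}^k-\boldsymbol{z}^k|^2 - \frac{\rho}{2}E_k[|\hat{\boldsymbol{z}}^k-\boldsymbol{z}^{k+1}|^2].
\end{align*}
I would then use the quadratic estimate of the previous paragraph to bound $E_k[|\hat{\boldsymbol{z}}^k-\boldsymbol{z}^{k+1}|^2]$ from above by $\tfrac{2\epsilon_k}{\gamma_k-\mu}-\tfrac{\gamma_k-\rho}{\gamma_k-\mu}E_k[|\boldsymbol{z}^{k+1}-\boldsymbol{z}^k|^2]+\tfrac{\gamma_k-\rho}{\gamma_k-\mu}|\hat{\boldsymbol{z}}^k-\boldsymbol{z}^k|^2$, substitute this into the envelope descent, and collect the $|\hat{\boldsymbol{z}}^k-\boldsymbol{z}^k|^2$ terms using the identity $\tfrac{\rho}{2}-\tfrac{\rho(\gamma_k-\rho)}{2(\gamma_k-\mu)}=\tfrac{\rho(\rho-\mu)}{2(\gamma_k-\mu)}$; rearranging then produces exactly \eqref{nllteq}.

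I expect the main obstacle to be the stability step, namely justifying $E_k[\varphi(u^{k+1},\boldsymbol{z}^{k+1};\mathcal{I}_k)]\ge E_k[\varphi(u^{k+1},\boldsymbol{z}^{k+1})]-\epsilon_k$. This requires applying Lemma \ref{mmberdb} to the \emph{random} minimizer with base point $\bar{\boldsymbol{z}}=\boldsymbol{z}^k$ and observing that $E_\xi[\varphi(u,\boldsymbol{z};\xi)]=\varphi(u,\boldsymbol{z})$, so that the population value coincides with the full objective while the discrepancy against the mini-batch value is precisely the $\epsilon_k$ controlled by Lemma \ref{nlstles}; the unbiased term at $(\hat{u}^k,\hat{\boldsymbol{z}}^k)$ is handled separately because that point is deterministic under $E_k$. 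The remainder is routine quadratic bookkeeping, the only delicate point being to keep the signs correct, which is harmless since $\gamma_k-\mu>0$ and all the identities invoked are exact regardless of the sign of $\gamma_k-\rho$.
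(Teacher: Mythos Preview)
Your proposal is correct and follows essentially the same route as the paper: one application of Lemma \ref{wc3plemma} to $\varphi(\cdot,\cdot;\mathcal{I}_k)$ at the iterate $(u^{k+1},\boldsymbol{z}^{k+1})$, the stability bound $E_k[\varphi(u^{k+1},\boldsymbol{z}^{k+1})-\varphi(u^{k+1},\boldsymbol{z}^{k+1};\mathcal{I}_k)]\le\epsilon_k$ from Lemmas \ref{mmberdb} and \ref{nlstles}, and then feeding the resulting bound on $E_k[|\hat{\boldsymbol{z}}^k-\boldsymbol{z}^{k+1}|^2]$ into the envelope inequality $E_k[\varphi_{1/\rho}(\boldsymbol{z}^{k+1})]\le\varphi(\hat u^k,\hat{\boldsymbol{z}}^k)+\tfrac{\rho}{2}E_k[|\hat{\boldsymbol{z}}^k-\boldsymbol{z}^{k+1}|^2]$. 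The only minor deviation is that you invoke Lemma \ref{wc3plemma} a second time for the full objective $\varphi$ (gaining an extra $\tfrac{\rho-\mu}{2}|\boldsymbol{z}^{k+1}-\hat{\boldsymbol{z}}^k|^2$), whereas the paper uses the bare optimality inequality $\varphi(\hat u^k,\hat{\boldsymbol{z}}^k)+\tfrac{\rho}{2}|\hat{\boldsymbol{z}}^k-\boldsymbol{z}^k|^2\le\varphi(u^{k+1},\boldsymbol{z}^{k+1})+\tfrac{\rho}{2}|\boldsymbol{z}^{k+1}-\boldsymbol{z}^k|^2$; since you immediately discard the extra term by lowering $\tfrac{\gamma_k+\rho-2\mu}{2}$ to $\tfrac{\gamma_k-\mu}{2}$, the two arguments coincide from that point on.
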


Before we get to the main theorem, the following corollary implies that  $\varphi_{1/\rho}$  in \eqref{nlprxvhgm} is differentiable, which is a direct byproduct of 
Lemma \ref{nlreprelm}.  
\begin{corollary}
	\label{nlproxmp}
	Suppose that Assumption \ref{hypf} holds. Let $\varphi$ be as in \eqref{nlGPvpI}.  Given $\gamma>0$, for $\bar{\boldsymbol{z}}\in \mathcal{X}$, we define
		$(\hat{u}, \hat{\boldsymbol{z}}) = \argmin_{u\in \mathcal{U}, \boldsymbol{z}\in \mathcal{X}}\varphi(u, \boldsymbol{z}) + \frac{\gamma}{2}|\boldsymbol{z} - \bar{\boldsymbol{z}}|^2$. 
	and let $\varphi_{1/\gamma}$ be defined as in \eqref{nlprxvhgm}. 
	Define
	\begin{align}
		\label{nldefpsi}
		\psi({\boldsymbol{z}}) = \frac{\lambda}{2}\boldsymbol{z}^T_{}(\mathcal{K}(\boldsymbol{\phi}_{}, \boldsymbol{\phi}) + \frac{\lambda N}{\beta}I)^{-1}\boldsymbol{z}_{}+ \frac{1}{2N}\sum_{i\in \mathcal{N}}|f_i(z_i) - y_i|^2, \forall \boldsymbol{z}\in \mathcal{X}. 
	\end{align}
	Then,   
	$\hat{\boldsymbol{z}}=\operatorname{prox}_{\psi/\gamma}(\bar{\boldsymbol{z}})$
	and 
		$\varphi_{1/\gamma}(\bar{\boldsymbol{z}}) = \min_{\boldsymbol{z}\in \mathcal{X}}\psi(\boldsymbol{z}) + \frac{\gamma}{2}|\boldsymbol{z} - \bar{\boldsymbol{z}}|^2$. 
	Furthermore, $\varphi_{1/\gamma}$ is differentiable and 
		$\nabla \varphi_{1/\gamma}(\bar{\boldsymbol{z}})  = \gamma(\Bar{\boldsymbol{z}} - \hat{\boldsymbol{z}})$.
\end{corollary}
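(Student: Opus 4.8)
The plan is to derive everything from Lemma~\ref{nlreprelm} specialized to the full index set $\mathcal{I}=\mathcal{N}$, together with the Moreau-envelope machinery recalled in Subsection~\ref{subsecPrere}. The starting observation is that the objective $\varphi(u,\boldsymbol{z})$ of \eqref{nldefvph} coincides with the mini-batch objective $\varphi(u,\boldsymbol{z};\mathcal{N})$ of \eqref{nlGPvpI} when $\mathcal{I}=\mathcal{N}$ and $M=N$. Hence I would apply Lemma~\ref{nlreprelm} with $\mathcal{I}=\mathcal{N}$, the given $\bar{\boldsymbol{z}}$, and step size $\gamma$. This immediately reduces the joint minimization defining $(\hat{u},\hat{\boldsymbol{z}})$ to the representer formula \eqref{nluzprli} for $\hat{u}$ and the finite-dimensional problem \eqref{nlmzp} for $\hat{\boldsymbol{z}}$.

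The key bookkeeping step is to recognize that \eqref{nlmzp} with $\mathcal{I}=\mathcal{N}$ is literally $\min_{\boldsymbol{z}\in\mathcal{X}}\psi(\boldsymbol{z})+\frac{\gamma}{2}|\boldsymbol{z}-\bar{\boldsymbol{z}}|^2$: the quadratic term $\frac{\lambda}{2}\boldsymbol{z}^{T}(\mathcal{K}(\boldsymbol{\phi},\boldsymbol{\phi})+\frac{\lambda N}{\beta}I)^{-1}\boldsymbol{z}$ is exactly the first term of $\psi$ in \eqref{nldefpsi}, and $\frac{1}{2N}\sum_{i\in\mathcal{N}}|f_i(z_i)-y_i|^2$ is the second. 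Unwinding the notation of Subsection~\ref{subsecPrere}, the minimizer of this problem is precisely $\operatorname{prox}_{\psi/\gamma}(\bar{\boldsymbol{z}})$, which gives $\hat{\boldsymbol{z}}=\operatorname{prox}_{\psi/\gamma}(\bar{\boldsymbol{z}})$. For the envelope identity I would minimize $\varphi(u,\boldsymbol{z})+\frac{\gamma}{2}|\boldsymbol{z}-\bar{\boldsymbol{z}}|^2$ first over $u$ for fixed $\boldsymbol{z}$; Lemma~\ref{nlreprelm} shows that this partial minimization produces exactly $\psi(\boldsymbol{z})$, so that $\varphi_{1/\gamma}(\bar{\boldsymbol{z}})=\min_{\boldsymbol{z}\in\mathcal{X}}\psi(\boldsymbol{z})+\frac{\gamma}{2}|\boldsymbol{z}-\bar{\boldsymbol{z}}|^2$, as claimed.

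Finally, for differentiability I would note that the identity just established exhibits $\varphi_{1/\gamma}$ as the Moreau envelope of $\psi$ with parameter $1/\gamma$ (matching the envelope of Subsection~\ref{subsecPrere} via $\frac{1}{2(1/\gamma)}=\frac{\gamma}{2}$). It then suffices to verify that $\psi$ is weakly convex and to invoke the standard smoothness result quoted there. Weak convexity of $\psi$ follows because its quadratic term is convex---the matrix $\mathcal{K}(\boldsymbol{\phi},\boldsymbol{\phi})+\frac{\lambda N}{\beta}I$ is positive definite, being a positive-semidefinite Gram matrix plus a positive multiple of $I$, so its inverse is positive definite---while the term $\frac{1}{2N}\sum_{i}|f_i(z_i)-y_i|^2$ is $\mu$-weakly convex by Lemma~\ref{wcvxt}; hence $\psi$ is $\mu$-weakly convex. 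The cited result then yields, for $\gamma>\mu$, that $\varphi_{1/\gamma}$ is $C^1$ with $\nabla\varphi_{1/\gamma}(\bar{\boldsymbol{z}})=\gamma\bigl(\bar{\boldsymbol{z}}-\operatorname{prox}_{\psi/\gamma}(\bar{\boldsymbol{z}})\bigr)=\gamma(\bar{\boldsymbol{z}}-\hat{\boldsymbol{z}})$.

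I would expect no serious obstacle here, since the corollary is essentially a translation of Lemma~\ref{nlreprelm} into the Moreau-envelope language; the only points requiring care are confirming the positive-definiteness of the matrix defining the quadratic part of $\psi$ (needed both for its convexity and for the well-posedness of the inverse) and tracking the weak-convexity constant $\mu$, so that the differentiability claim is invoked in the admissible regime $\gamma>\mu$ in which the corollary is subsequently applied.
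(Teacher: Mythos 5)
Your proposal is correct and follows essentially the same route as the paper: both specialize Lemma \ref{nlreprelm} (via the partial minimization over $u$ recorded in \eqref{nlminLseq}) to $\mathcal{I}=\mathcal{N}$ to identify $\varphi_{1/\gamma}$ as the Moreau envelope of $\psi$, then invoke Lemma \ref{wcvxt} for $\mu$-weak convexity and the standard smoothness result for the gradient formula. Your extra care in checking convexity of the quadratic term of $\psi$ and in flagging that differentiability requires $\gamma>\mu$ is a welcome refinement of details the paper leaves implicit.
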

\begin{proof}
	Let
	\begin{align*}
		(\hat{u}, \hat{\boldsymbol{z}}) = \argmin_{u\in \mathcal{U}, \boldsymbol{z}\in \mathcal{X}}\varphi(u, \boldsymbol{z}) + \frac{\gamma}{2}|\boldsymbol{z} - \bar{\boldsymbol{z}}|^2.
	\end{align*}
	 Thus, by \eqref{nlprxvhgm} and the definition of $\mathcal{L}$ in \eqref{nlLeq},  $
		(\hat{u}, \hat{\boldsymbol{z}}) = \argmin_{u\in \mathcal{U}, \boldsymbol{z}\in \mathcal{X}}\mathcal{L}(u, \boldsymbol{z}; \mathcal{N})$ and  $\varphi_{1/\gamma}(\bar{\boldsymbol{z}}) = \min _{u\in \mathcal{U}, \boldsymbol{z}\in \mathcal{X}}\mathcal{L}(u, \boldsymbol{z}; \mathcal{N})$, where $\mathcal{N}=\{1,\dots, N\}$. 
	Thus, the equation \eqref{nlminLseq} implies that $\hat{\boldsymbol{z}}=\operatorname{prox}_{\psi/\gamma}(\bar{\boldsymbol{z}})$
	and 
	$\varphi_{1/\gamma}(\bar{\boldsymbol{z}}) = \min_{\boldsymbol{z}\in \mathcal{X}}\psi(\boldsymbol{z}) + \frac{\gamma}{2}|\boldsymbol{z} - \bar{\boldsymbol{z}}|^2$.  Hence, $\varphi_{1/\rho}$ is a Moreau envelope. By Lemma \ref{wcvxt},  $\psi$ in \eqref{nldefpsi} is $\mu$-weakly convex in $\boldsymbol{z}$. Then, standard results  \cite{rockafellar1970convex} implies that $\nabla \varphi_{1/\gamma}(\bar{\boldsymbol{z}})  = \gamma(\Bar{\boldsymbol{z}} - \hat{\boldsymbol{z}})$. 
\end{proof}

Next, we have our main theorem about the convergence of Algorithm \ref{alg:1} in the setting of nonlinear measurements. 
\begin{theorem}
	\label{nlmt}
	Suppose that Assumptions \ref{hypnlphibd} and \ref{hypf}  hold. Let $\rho>\mu>0$. In Algorithm \ref{alg:1}, let $\gamma_k = \gamma$ for some $\gamma>\rho$. In each iteration, let $|\mathcal{I}_k|=M$ for all $k\geq 1$, where $M\in \mathbb{N}$. Let $(u^{k}, \boldsymbol{z}^k)_{k=1}^K$ be the sequence generated by Algorithm \ref{alg:1},  let $k^*$ be an index uniformly sampled from $\{1, \dots, K\}$, and let $\varphi_{1/\rho}$ be defined as in \eqref{nlprxvhgm} for $\rho>0$. Then, there exists a constant $C$ depending on $\beta, r, \lambda, L, \mu$ such that
	\begin{align}
		\label{nlmcvt}
		E[|\nabla \varphi_{1/\rho}(\boldsymbol{z}^{k^*})|^2] \leq \frac{2\rho(\gamma - \mu)\Delta}{(\rho - \mu)K} + \frac{C\rho^2}{M(\rho-\mu)\min\{\lambda, \gamma-\mu\}},
	\end{align}
with $\Delta = \varphi_{1/\rho}(\hat{\boldsymbol{z}}^1) - \min_{\boldsymbol{z}\in \mathcal{X}}\varphi_{1/\rho}(\boldsymbol{z})$, where $\hat{\boldsymbol{z}}^1$ is defined as in \eqref{nluhzh}. 
\end{theorem}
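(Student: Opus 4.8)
The plan is to turn the one-step descent estimate of Proposition \ref{pronlmid} into a telescoping sum over $k=1,\dots,K$ and then identify the averaged gradient norm with the uniform draw of $k^*$. Since $\gamma_k=\gamma$ is constant and $|\mathcal{I}_k|=M$, Lemma \ref{nlstles} supplies a single stability constant $\epsilon = C/(M\min\{\lambda,\gamma-\mu\})$, so Proposition \ref{pronlmid} becomes
\begin{align*}
	\frac{\rho(\rho-\mu)}{2(\gamma-\mu)}|\hat{\boldsymbol{z}}^k-\boldsymbol{z}^k|^2 + \frac{\rho(\gamma-\rho)}{2(\gamma-\mu)}E_k[|\boldsymbol{z}^{k+1}-\boldsymbol{z}^k|^2] \leq \varphi_{1/\rho}(\hat{\boldsymbol{z}}^k) - E_k[\varphi_{1/\rho}(\boldsymbol{z}^{k+1})] + \frac{\rho\epsilon}{\gamma-\mu}.
\end{align*}
Because $\gamma>\rho$, the second term on the left is non-negative and I would discard it. By Corollary \ref{nlproxmp} we have $\nabla\varphi_{1/\rho}(\boldsymbol{z}^k)=\rho(\boldsymbol{z}^k-\hat{\boldsymbol{z}}^k)$, hence $|\hat{\boldsymbol{z}}^k-\boldsymbol{z}^k|^2 = \rho^{-2}|\nabla\varphi_{1/\rho}(\boldsymbol{z}^k)|^2$, which rewrites the left-hand side as $\tfrac{\rho-\mu}{2\rho(\gamma-\mu)}|\nabla\varphi_{1/\rho}(\boldsymbol{z}^k)|^2$.

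The decisive observation, which explains why $\Delta$ is defined through $\hat{\boldsymbol{z}}^1$ rather than $\boldsymbol{z}^1$, is the pointwise comparison $\varphi_{1/\rho}(\hat{\boldsymbol{z}}^{k+1}) \leq \varphi_{1/\rho}(\boldsymbol{z}^{k+1})$. This follows directly from the definitions \eqref{nluhzh} and \eqref{nlprxvhgm}: evaluating the minimization defining $\varphi_{1/\rho}(\hat{\boldsymbol{z}}^{k+1})$ at the feasible pair $(\hat{u}^{k+1},\hat{\boldsymbol{z}}^{k+1})$ gives $\varphi_{1/\rho}(\hat{\boldsymbol{z}}^{k+1}) \leq \varphi(\hat{u}^{k+1},\hat{\boldsymbol{z}}^{k+1}) \leq \varphi_{1/\rho}(\boldsymbol{z}^{k+1})$, the last step dropping the non-negative penalty $\tfrac{\rho}{2}|\hat{\boldsymbol{z}}^{k+1}-\boldsymbol{z}^{k+1}|^2$. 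Replacing $-\varphi_{1/\rho}(\boldsymbol{z}^{k+1})$ by the larger $-\varphi_{1/\rho}(\hat{\boldsymbol{z}}^{k+1})$ on the right then produces a genuinely telescoping sequence in the values $\varphi_{1/\rho}(\hat{\boldsymbol{z}}^k)$.

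Taking total expectations, using the tower property, and summing over $k=1,\dots,K$, the telescope collapses to $\varphi_{1/\rho}(\hat{\boldsymbol{z}}^1) - E[\varphi_{1/\rho}(\hat{\boldsymbol{z}}^{K+1})] \leq \varphi_{1/\rho}(\hat{\boldsymbol{z}}^1) - \min_{\boldsymbol{z}\in\mathcal{X}}\varphi_{1/\rho}(\boldsymbol{z}) = \Delta$, so that
\begin{align*}
	\frac{\rho-\mu}{2\rho(\gamma-\mu)}\sum_{k=1}^K E[|\nabla\varphi_{1/\rho}(\boldsymbol{z}^k)|^2] \leq \Delta + \frac{K\rho\epsilon}{\gamma-\mu}.
\end{align*}
Since $k^*$ is uniform on $\{1,\dots,K\}$, the left sum equals $K\,E[|\nabla\varphi_{1/\rho}(\boldsymbol{z}^{k^*})|^2]$; dividing by $K$, multiplying through by $2\rho(\gamma-\mu)/(\rho-\mu)$, and substituting $\epsilon$ yields \eqref{nlmcvt} after absorbing the numerical factor into $C$. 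I expect the only genuinely non-mechanical step to be the telescoping reconciliation of the previous paragraph: Proposition \ref{pronlmid} delivers $\varphi_{1/\rho}(\hat{\boldsymbol{z}}^k)$, not $\varphi_{1/\rho}(\boldsymbol{z}^k)$, on its right-hand side, and one must insert the pointwise bound $\varphi_{1/\rho}(\hat{\boldsymbol{z}}^{k+1}) \leq \varphi_{1/\rho}(\boldsymbol{z}^{k+1})$ at exactly the right place to make the sum collapse to $\Delta$; everything else is bookkeeping with the constants from Lemma \ref{nlstles}.
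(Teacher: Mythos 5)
Your proposal is correct and follows essentially the same route as the paper: apply Proposition \ref{pronlmid} with constant $\gamma_k=\gamma$, drop the nonnegative $\gamma-\rho$ term, identify $|\hat{\boldsymbol{z}}^k-\boldsymbol{z}^k|$ with the Moreau-envelope gradient via Corollary \ref{nlproxmp}, telescope, and average over the uniform $k^*$. The one step you spell out that the paper leaves implicit, namely $\varphi_{1/\rho}(\hat{\boldsymbol{z}}^{k+1})\leq\varphi(\hat{u}^{k+1},\hat{\boldsymbol{z}}^{k+1})\leq\varphi_{1/\rho}(\boldsymbol{z}^{k+1})$ to make the sum collapse to $\Delta$, is exactly the right reconciliation and is needed for the paper's displayed telescoped bound as well.
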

\begin{proof}
	Taking $\rho>\mu>0$, we define  $\hat{\boldsymbol{z}}^k$ as  in \eqref{nluhzh}. By Corollary \ref{nlproxmp}, we get $\nabla \varphi_{1/\rho}(\boldsymbol{z}^k) = \rho(\boldsymbol{z}^k - \hat{\boldsymbol{z}}^k)$.  Thus, using Proposition \ref{pronlmid}, we obtain
	\begin{align}
		\label{nlcbp}
		\frac{\rho - \mu}{2\rho(\gamma - \mu)}|\nabla\varphi_{1/\rho}(\boldsymbol{z}^k)|^2 \leq \varphi_{1/\rho}(\hat{\boldsymbol{z}}^k) - E_k[\varphi_{1/\rho}(\boldsymbol{z}^{k+1})] + \frac{\rho\epsilon}{\gamma - \mu},
	\end{align}
	where $\epsilon$ is given in \eqref{nonepsdef}. 
	Summing \eqref{nlcbp} from $k=1$ to $k=K$, taking the expectation over all the randomness, and dividing the result inequality by $K$, we get
	\begin{align}
		\label{nlmcseq}
		\frac{(\rho - \mu)}{2(\gamma - \mu)\rho K}\sum_{k=1}^KE[|\nabla\varphi_{1/\rho}(\boldsymbol{z}^k)|^2] \leq \frac{\varphi_{1/\rho}(\hat{\boldsymbol{z}}^1) - E[\varphi(\boldsymbol{z}^{K+1})]}{K} + \frac{\rho \epsilon}{\gamma - \mu}. 
	\end{align}
	Therefore, using the definition of $k^*$ and Jenssen's inequality, we conclude from \eqref{nlmcseq} and \eqref{nonepsdef} that \eqref{nlmcvt} holds. 
\end{proof}
Theorem \ref{nlmt} shows that $\boldsymbol{z}^{k^*}$ tends close to the neighbor of a nearly stationary point on expectation as the number of iterations and the batch size increase. 
The bound provided in \eqref{nlmcvt} demonstrates that the error does not diminish to zero with an increasing number of iterations for a fixed batch size. Conversely, the accuracy improves as the size of mini-batches increases, a fact supported by the subsequent numerical experiments discussed in the following section.

\begin{figure}[!hbtbp]
	\centering          
	\begin{subfigure}[b]{0.3\textwidth}
		\includegraphics[width=\textwidth]{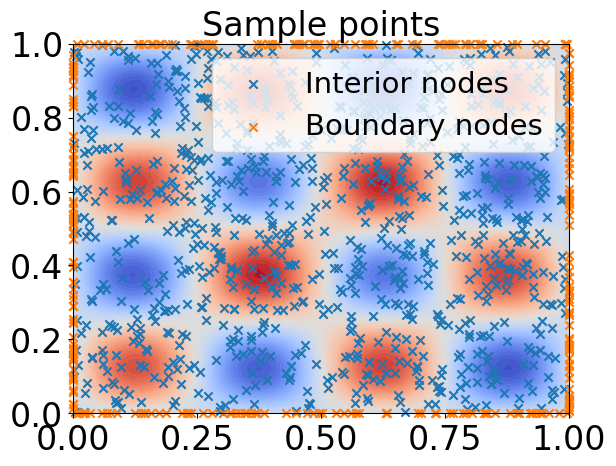}
		\caption{Sample points}
		\label{fig:NonlinearElliptic:sp}
	\end{subfigure} 
	\begin{subfigure}[b]{0.3\textwidth}
		\includegraphics[width=\textwidth]{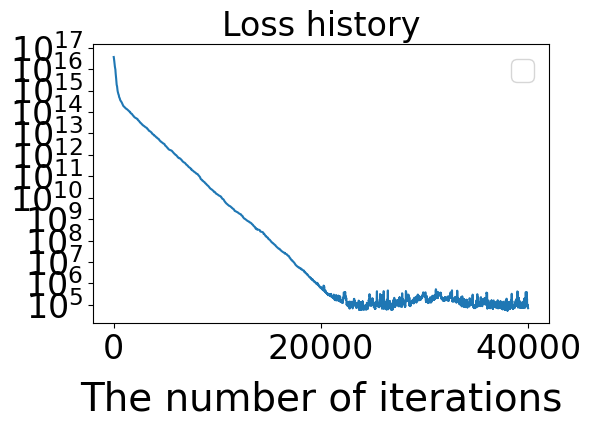}
		\caption{{\color{black}Averaged} Loss history.}
		\label{fig:NonlinearElliptic:lshst1}
	\end{subfigure} 
	\begin{subfigure}[b]{0.3\textwidth}
		\includegraphics[width=\textwidth]{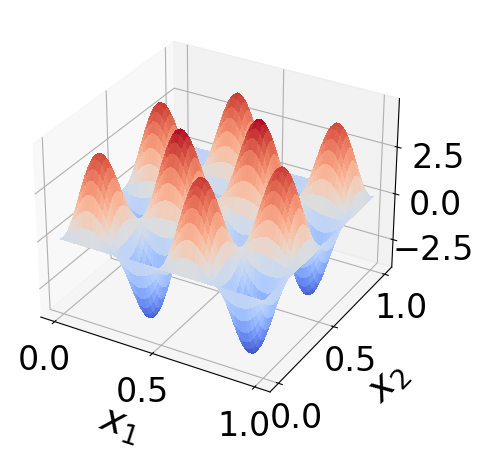}
		\caption{$u^*$.}
		\label{fig:NonlinearElliptic:ustar}
	\end{subfigure} \\
	\begin{subfigure}[b]{0.3\textwidth}
		\includegraphics[width=\textwidth]{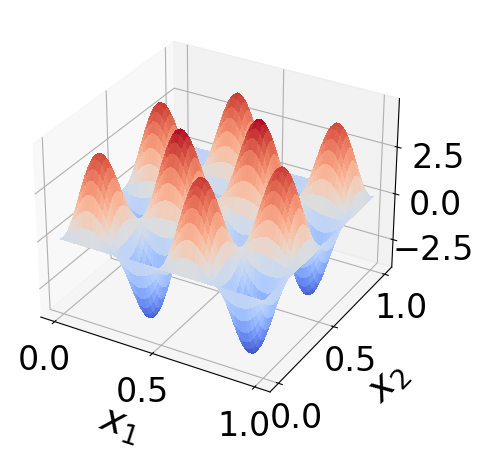}
		\caption{$u_{\text{MGP}}$}
		\label{fig:NonlinearElliptic:gpmu}
	\end{subfigure} 
	\begin{subfigure}[b]{0.3\textwidth}
		\includegraphics[width=\textwidth]{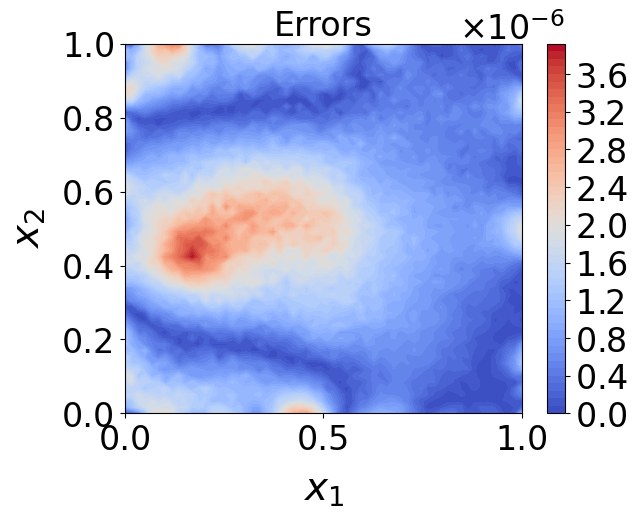}
		\caption{\color{black}Errors of the mini-batch GP method.}
		\label{fig:NonlinearElliptic:error1}
	\end{subfigure} 
\begin{subfigure}[b]{0.3\textwidth}
	\includegraphics[width=\textwidth]{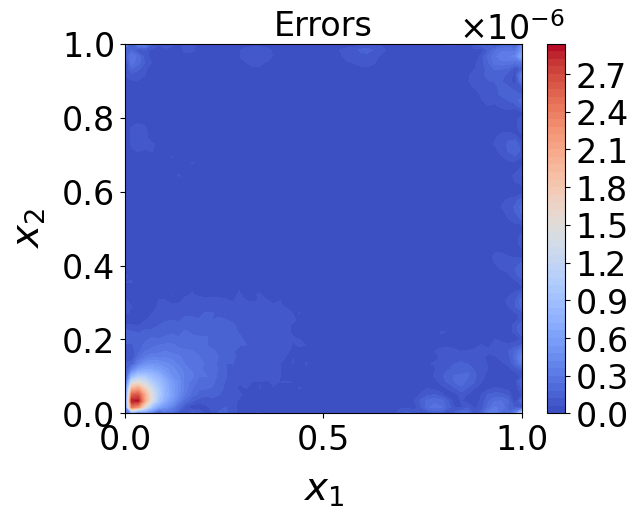}
	\caption{\color{black}Errors of the GP method}
	\label{fig:NonlinearElliptic:errorvanilla}
\end{subfigure} \\
	\caption{Nonlinear elliptic equation: results for the mini-batch GP method using {\color{black}$64$} points in each batch. The parameters $\eta=10^{-13}$. (a): a set of sample points and the contour of the true solution; (b): {\color{black}The convergence history, averaged across the subsequent 10 iterations} (semi-log scale in $y$ axis);  (c): the graph of the true solution $u^*$; (d): the numerical solution $u_{\text{MGP}}$ of the mini-batch GP method; (e): the contour of point-wise errors {\color{black} for our mini-batch GP method}; {\color{black}(f): the contour of point-wise errors of the GP method \cite{chen2021solving}}.}
	\label{fig:NonlinearElliptic:1}
\end{figure}

\section{Numerical Experiments}
\label{secNumResults}
In this section, we demonstrate the efficacy of the mini-batch method by solving a nonlinear elliptic equation in Subsection \ref{subsecNlElliptic} and Burgers' equation in Subsection \ref{subsecBurg}. Our implementation uses Python with the JAX package for automatic differentiation. Our experiments are conducted only on CPUs. To better leverage the information between points in each mini-batch, we take mini-batches with nearby samples instead of sampling uniform mini-batches. A mini-batch consists of a uniformly randomly sampled point and its $M-1$ nearest points. We use $k$-$d$ tree to support such search, which finds $M-1$ nearest for each point in the whole data set of size $N$ in $O(N\log N)$ time and $O(N)$ space. 

\subsection{Solving A Nonlinear Elliptic PDE}
\label{subsecNlElliptic}

{\color{black}In this example, we solve a nonlinear elliptic equation by utilizing the preconditioned mini-batch method delineated in Remark \ref{rmk:pcd}. The chosen preconditioner, \( Q_k \), is structured as a diagonal matrix. Its diagonal elements are the diagonal entries of the covariance matrix, specific to each set of mini-batch points.} Let $d>1$ and $\Omega=(0,1)^2$. Given a continuous function $f:\Omega \mapsto \mathbb{R}$, we find $u$ solving 
\begin{align}
	\label{illpde}
	\begin{cases}
		-\Delta u(x) + u^3(x) = f(x), x\in \Omega,\\
		u(x) = 0, x\in \partial \Omega. 
	\end{cases}
\end{align}
We prescribe the true solution $u$ to be $\sin(\pi x_1)\sin(\pi x_2)+4\sin(4\pi x_1)\sin(4\pi x_2)$ for $\boldsymbol{x}:=(x_1, x_2)\in \overline{\Omega}$ and compute $f$ at the right-hand side of the first equation of \eqref{illpde} accordingly. For regression, we choose the Gaussian kernel
$K(\boldsymbol{x}, \boldsymbol{y}) =\operatorname{exp}(-\frac{|\boldsymbol{x}-\boldsymbol{y}|^2}{2\sigma^2})$ with the lengthscale parameter $\sigma=0.2$. We take $N=1200$ samples in the domain with $N_\Omega=900$ interior points. 
Meanwhile, we set the nugget  $\eta=10^{-13}$ (see Remark \ref{rmknugget}). 
The algorithm stops after {\color{black}$40000$} iterations. In each time step, we use the technique of eliminating variables at each iteration (see Subsection 3.3 in \cite{chen2021solving}) and solve the resulting nonlinear minimization problem using the Gauss--Newton method, which terminates once the error between two successive steps is less than $10^{-5}$ or the maximum iteration number $30$ is reached.

Figure \ref{fig:NonlinearElliptic:1} shows the results of a realization of the mini-batch GP method when the batch size {\color{black}$M=64$}. Figure \ref{fig:NonlinearElliptic:sp} shows the samples taken in the domain and the contour of the true solution. {\color{black}Figure \ref{fig:NonlinearElliptic:lshst1} depicts the reduction in loss  for the mini-batch GP method during a single run. The loss is derived from the average of the most recent 10 iterations in the algorithm.}. The explicit solution of $u^*$ is given in Figure  \ref{fig:NonlinearElliptic:ustar}. Figure \ref{fig:NonlinearElliptic:gpmu} plots the approximation of $u^*$, denoted by $u_{MGP}$. The contour of the point-wise errors of $u_{MGP}$ and $u^*$ is given in Figure \ref{fig:NonlinearElliptic:error1}. {\color{black}Figure \ref{fig:NonlinearElliptic:errorvanilla} presents the contour  of errors for the GP method as described in \cite{chen2021solving}, using the whole identical sample sets. It is observed that the mini-batch approach with a batch size of 64 achieves accuracy comparable to that of the  GP method using the entire samples. }

\subsection{Burgers' Equation}
\label{subsecBurg}

\begin{figure}[!hbtbp]
	\centering          
	\begin{subfigure}[b]{0.3\textwidth}
		\includegraphics[width=\textwidth]{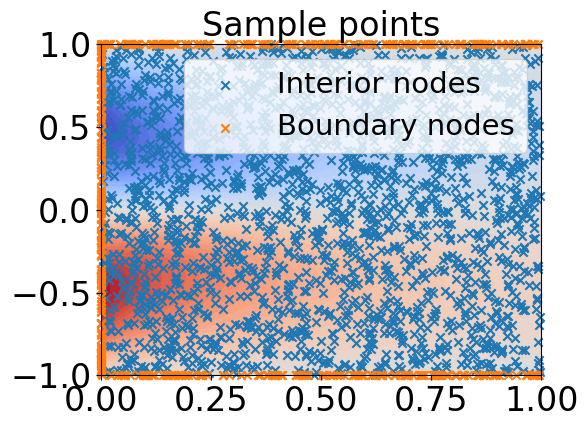}
		\caption{Sample points}
		\label{fig:burgers:sp}
	\end{subfigure} 
	\begin{subfigure}[b]{0.3\textwidth}
		\includegraphics[width=\textwidth]{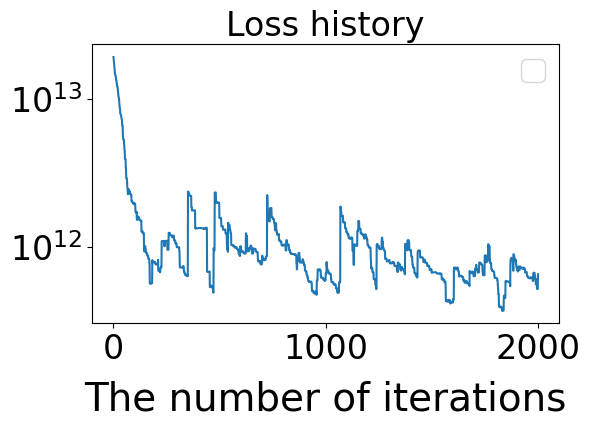}
		\caption{Loss history.}
		\label{fig:burgers:lshst1}
	\end{subfigure} \\
	\begin{subfigure}[b]{0.3\textwidth}
		\includegraphics[width=\textwidth]{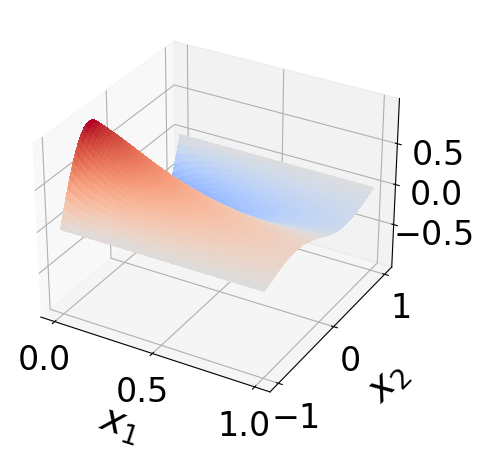}
		\caption{$u^*$.}
		\label{fig:burgers:ustar}
	\end{subfigure} 
	\begin{subfigure}[b]{0.3\textwidth}
		\includegraphics[width=\textwidth]{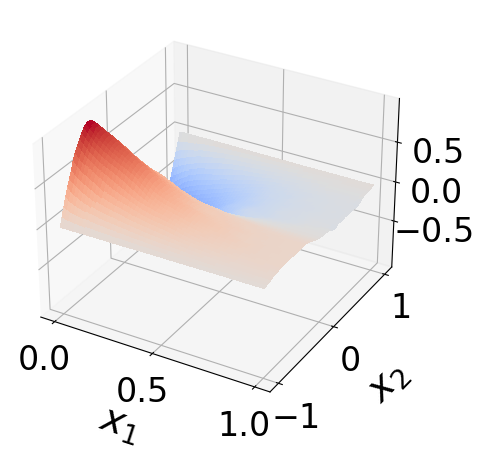}
		\caption{$u_{\text{MGP}}$}
		\label{fig:burgers:gpmu}
	\end{subfigure} 
	\begin{subfigure}[b]{0.325\textwidth}
		\includegraphics[width=\textwidth]{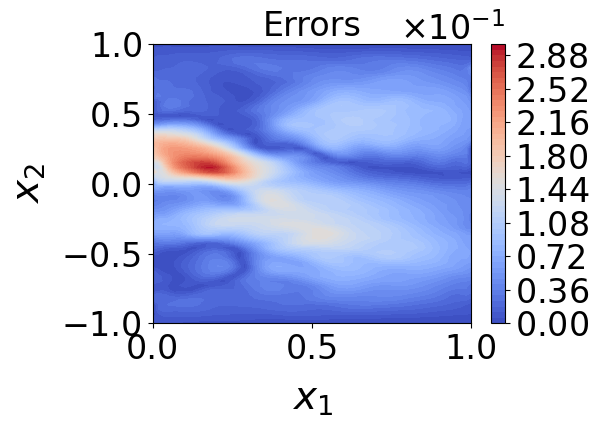}
		\caption{Contour of point-wise errors.}
		\label{fig:burgers:error1}
	\end{subfigure} 
	\caption{Burgers' equation: results for the mini-batch GP method using $75$ points in each batch. The parameters $\eta=10^{-10}$, $\gamma=1$. (a): a set of sample points and the contour of the true solution; (b): the convergence history (semi-log scale in $y$ axis); (c): the graph of the true solution $u^*$; (d): the numerical solution $u_{\text{MGP}}$ of the mini-batch GP method; (e): the contour of point-wise errors. }
	\label{fig:Burgers:1}
\end{figure}

In this subsection, we consider Burgers' equation
\begin{align*}
	\begin{cases}
		\partial_t u + u \partial_x u + \nu \partial_x^2 u = 0, \forall (t, x)\in (0, 1]\times (-1, 1),\\
		u(0, x) = -\sin(\pi x), \forall x\in [-1, 1],\\
		u(t, -1) = u(t, 1)=0, \forall t\in [0, 1],
	\end{cases}
\end{align*}
where $\nu=0.2$ and  $u$ is unknown. 
We take $N=2400$ points uniformly in the space-time domain, of which $N_\Omega=2000$ points are in the domain's interior. As in \cite{chen2021solving}, we choose the anisotropic kernel
\begin{align}
	\label{anisotropickn}
	K((t, x), (t', x'))=\operatorname{exp}(-(t-t')^2/\sigma_1^2-(x-x')^2/{\sigma_2^2}),
\end{align}
with $(\sigma_1, \sigma_2)=(0.3, 0.05)$. To compute pointwise errors, as in \cite{chen2021solving}, we compute the true solution using the Cole--Hopf transformation and the numerical quadrature. We use the technique of eliminating constraints at each iteration (see Subsection 3.3 in \cite{chen2021solving}) and use the Gauss--Newton method to solve the resulting nonlinear quadratic minimization problem, which terminates once the error between two successive steps is less than $10^{-5}$ or the maximum iteration number $100$ is reached. Throughout the experiments, we use the nugget $\eta=10^{-10}$ (see Remark \ref{rmknugget}). 

\begin{figure}[!hbtbp]
	\centering          
	\begin{subfigure}[b]{0.3\textwidth}
		\includegraphics[width=\textwidth]{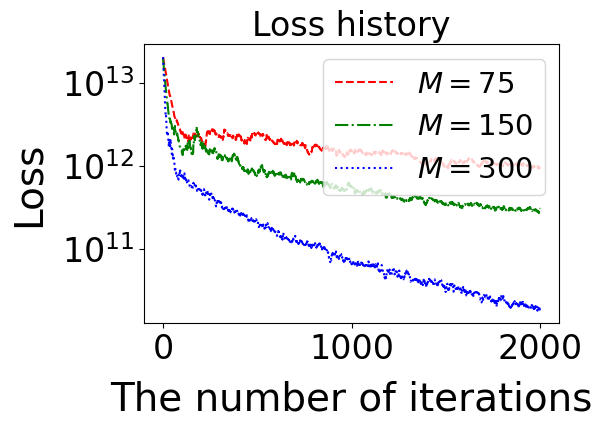}
		\caption{Averaged loss histories. Semi-log scale in $y$ axis.}
		\label{fig:burgers:avgls}
	\end{subfigure} 
	\begin{subfigure}[b]{0.3\textwidth}
		\includegraphics[width=\textwidth]{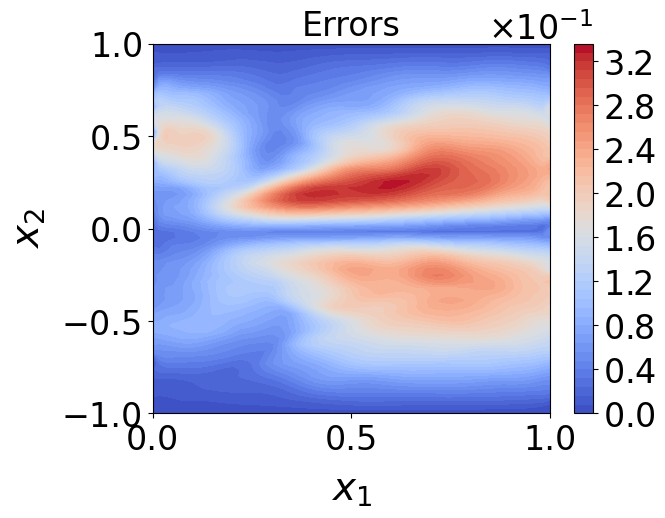}
		\caption{Averaged point-wise errors when $M=75$.}
		\label{fig:burgers:avger12}
	\end{subfigure} \\
	\begin{subfigure}[b]{0.325\textwidth}
		\includegraphics[width=\textwidth]{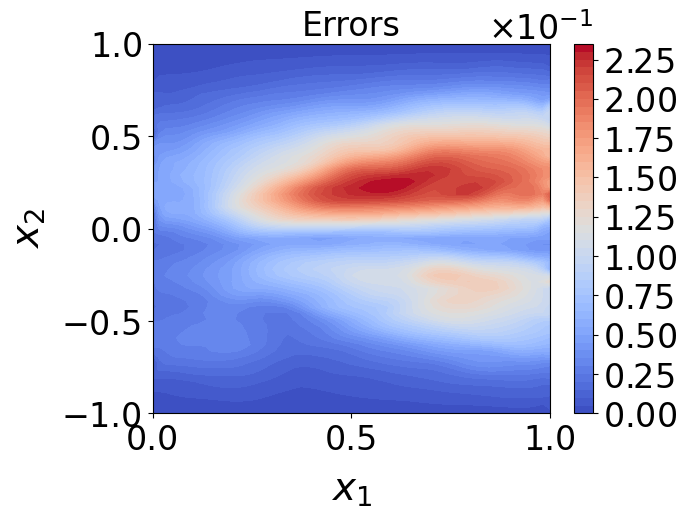}
		\caption{Averaged point-wise errors when $M=150$.}
		\label{fig:burgers:avger24}
	\end{subfigure} 
	\begin{subfigure}[b]{0.3\textwidth}
		\includegraphics[width=\textwidth]{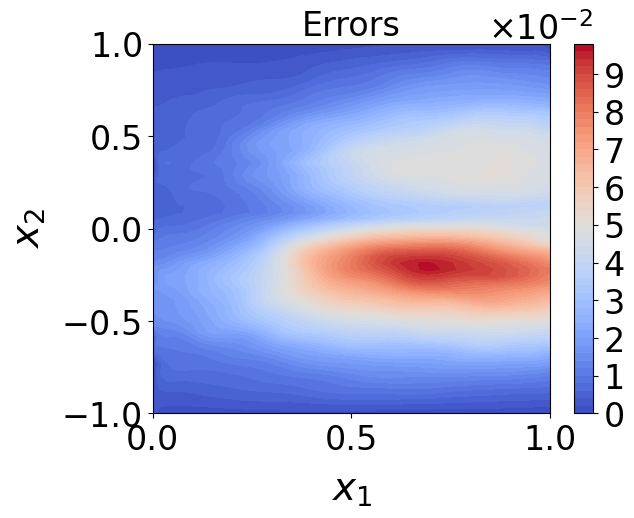}
		\caption{Averaged point-wise errors when $M=300$.}
		\label{fig:burgers:avger48}
	\end{subfigure} 
	\caption{Burgers' equation: averaged results over $10$ realizations for the mini-batch GP method using different batch sizes. The parameters $\eta=10^{-10}$, $\gamma=1$. (a): averaged loss histories (semi-log scale in $y$ axis); (b): the averaged point-wise errors when $M=75$; (c): the averaged point-wise errors when $M=150$; (e): the averaged point-wise errors when $M=300$. }
	\label{fig:burgers:2}
\end{figure}

Figure \ref{fig:Burgers:1} shows the results of a realization of the mini-batch GP method when we fix the batch size $M=75$. Figure \ref{fig:burgers:sp} shows the samples in the domain and the contour of the true solution. Figure \ref{fig:burgers:lshst1} illustrates the history of the loss generated by the min-batch GP method in one realization. The explicit solution of $u^*$ is plotted in Figure  \ref{fig:burgers:ustar}. Figure \ref{fig:burgers:gpmu} shows the numerical approximation of $u^*$, denoted by $u_{MGP}$. The contour of the point-wise errors between $u_{MGP}$ and $u^*$ is presented in Figure \ref{fig:burgers:error1}.

Then, we run the algorithm $10$ times for several batch sizes and plot the results in Figure \ref{fig:burgers:2}. The averaged loss histories are plotted in Figure \ref{fig:burgers:avgls}, from which we see that  increasing batch sizes results in faster convergence rates and smaller losses. The contours of averaged point-wise errors are presented in Figures \ref{fig:burgers:avger12}-\ref{fig:burgers:avger48}, which imply that a larger batch size achieves better accuracy. 

In our observations regarding the solution of Burgers' equation, it becomes evident that a larger number of samples in a mini-batch is required compared to the nonlinear elliptic example. This disparity could be attributed to the relatively lower regularity of Burgers' equation, where the uniformly distributed samples may not adequately capture the inherent regularity. Consequently, we defer the exploration of more effective strategies for sampling collocation points and selecting mini-batch points for future investigations.

\section{Conclusions}
\label{secConclu}
This paper presents a mini-batch framework to solve nonlinear PDEs with GPs.
The mini-batch method allocates the computational cost of GPs to each iterative step. We also perform a rigorous convergence analysis of the mini-batch method. For future work, extending our framework to more general GP regression problems like semi-supervised learning and hyperparameter learning would be interesting. In the numerical experiments, we notice that 
the choice of the positions of mini-batch samples influences the performance. Hence, a potential future work is investigating different sampling techniques to take mini-batch sample points. 

{
\color{black}
\begin{acknowledgement*}
The authors wish to thank Professor Andrew Stuart for his insightful discussions on the convergence of the stochastic gradient descent method.
\end{acknowledgement*}
}

\appendix

\section{Proofs of Results}
\label{secAppendix}
\begin{proof}[Proof of Lemma \ref{nlreprelm}]
	Let 
	$\mathcal{L}(u, \boldsymbol{z}; \mathcal{I}) = \varphi(u, \boldsymbol{z}; \mathcal{I}) + \frac{\gamma}{2}|\boldsymbol{z} - \Bar{\boldsymbol{z}}|^2, \forall u\in \mathcal{U}, \boldsymbol{z}\in \mathcal{X}$. 
	Define the sampling operator  ${S}_{\mathcal{I}}$ as in Lemma \eqref{solm}. Then, we have
	\begin{align}
		\label{nlLeq}
		\mathcal{L}(u, \boldsymbol{z}; \mathcal{I}) 
		=& \frac{1}{2}\bigg\langle {u}, \lambda {u} + \frac{\beta}{M}S_{\mathcal{I}}^*S_{\mathcal{I}}{u} - \frac{2\beta}{M}S_{\mathcal{I}}^*\boldsymbol{z}_{\mathcal{I}}\bigg\rangle_{\mathcal{U}}\nonumber \\
		&+ \frac{\beta}{2M}|\boldsymbol{z}_{\mathcal{I}}|^2 + \frac{1}{2M}\sum_{i\in \mathcal{I}}|f_i(z_i)-y_i|^2 + \frac{\gamma}{2}|\boldsymbol{z} - \Bar{\boldsymbol{z}}|^2.
	\end{align}
	Next, we first fix $\boldsymbol{z}\in \mathcal{X}$ and let  $\tilde{u}=\argmin_{u}\mathcal{L}(u, \boldsymbol{z};\mathcal{I})$.  
	Then, 
	$$\frac{\partial\mathcal{L}}{\partial u}\bigg |_{u=\Tilde{u}} = \lambda \Tilde{u} + \frac{\beta}{M}S_{\mathcal{I}}^*S_{\mathcal{I}}\Tilde{u} - \frac{\beta}{M}S_{\mathcal{I}}^*\boldsymbol{z}_{\mathcal{I}} = 0, $$
	which yields
	\begin{align}
		\label{nludfml}
		\Tilde{u} = \bigg(\frac{\lambda M}{\beta}I + S^*_{\mathcal{I}}S_{\mathcal{I}} \bigg)^{-1}S_{\mathcal{I}}^*\boldsymbol{z}_{\mathcal{I}}. 
	\end{align}
	Thus, using \eqref{nlLeq} and \eqref{nludfml}, we have
	\begin{align}
		\min_{u\in \mathcal{U}}\mathcal{L}(u, \boldsymbol{z}; \mathcal{I}) 
		=& -\frac{\beta}{2M}\langle S_{\mathcal{I}}\Tilde{u}, \boldsymbol{z}_{\mathcal{I}}\rangle + \frac{\beta}{2M}|\boldsymbol{z}_{\mathcal{I}}|^2 + \frac{1}{2M}\sum_{i\in \mathcal{I}}|f_i(z_i) - y_i|^2 + \frac{\gamma}{2}|\boldsymbol{z} - \Bar{\boldsymbol{z}}|^2\nonumber\\
		=& \frac{\lambda}{2}\boldsymbol{z}^T_{\mathcal{I}}\bigg(\mathcal{K}(\boldsymbol{\phi}_{\mathcal{I}}, \boldsymbol{\phi}_{\mathcal{I}}) + \frac{\lambda M}{\beta}I\bigg)^{-1}\boldsymbol{z}_{\mathcal{I}}+ \frac{1}{2M}\sum_{i\in \mathcal{I}}|f_i(z_i) - y_i|^2 +  \frac{\gamma}{2}|\boldsymbol{z} - \Bar{\boldsymbol{z}}|^2. \label{nlminLseq}
	\end{align} 
	where the last equality results from \eqref{bdImus}. 
	Therefore, \eqref{nludfml},  \eqref{eqre0}, and  \eqref{nlminLseq} imply that the minimizer $(u^\dagger, \boldsymbol{z}^\dagger)$ in \eqref{nlhproxpr} satisfies \eqref{nluzprli} and $\boldsymbol{z}^\dagger$ minimizes \eqref{nlmzp}.
\end{proof}

\begin{proof}[Proof of Lemma \ref{nlstles}]
	Let ${\mathcal{I}}' $ and ${\mathcal{I}}_{(i)}$ be  as in the beginning of this subsection. For $\bar{\boldsymbol{z}}\in \mathcal{X}$, let 
	\begin{align}
		(\hat{u}, \hat{\boldsymbol{z}}) = \widetilde{\operatorname{prox}}_{\varphi/\gamma}^{\mathcal{I}}(\bar{\boldsymbol{z}}) \text{ and }
		(\hat{u}_{(i)}, \hat{\boldsymbol{z}}_{(i)}) = \widetilde{\operatorname{prox}}_{\varphi/\gamma}^{{\mathcal{I}}_{(i)}}(\bar{\boldsymbol{z}}). \label{stbeqnll}
	\end{align}
	For $\gamma>1$, by the optimality condition of \eqref{stbeqnll}, there exists a constant $C$ depending on $\beta$, $\lambda$,  the diameter $r$ of $\mathcal{X}$, and the constant $U_f$ in Assumption \ref{hypf}, such that 
	\begin{align}
		\label{bduhnl}
		\|\hat{u}\|_{\mathcal{U}}^2\leq \frac{2}{\lambda}\varphi(0, \bar{\boldsymbol{z}};{\mathcal{I}})\leq \frac{\beta}{M\lambda}|\bar{\boldsymbol{z}}|^2+\frac{1}{\lambda M}\sum_{i\in \mathcal{I}}|f_i(\bar{\boldsymbol{z}}_i)-y_i|^2\leq C.
	\end{align}
	Thus, using \eqref{bduhnl} and Assumption \ref{hypnlphibd}, there exists a constant $C$ depending on $\beta$, $\lambda$, $r$, $U_f$, and the constant $L$ in Assumption \ref{hypnlphibd}, such that 
	$|[\boldsymbol{\phi}_i, \hat{u}]|\leq \|\boldsymbol{\phi}_i\|_{\mathcal{U}^*}\|\hat{u}\|_{\mathcal{U}}\leq C$. 
	Let $\mathcal{B}=\{u\in \mathcal{U}|\|u\|_{\mathcal{U}} \leq \Tilde{C}\}\times \mathcal{X}$, where $\Tilde{C}$ is large enough  such that $\mathcal{B}$ contains $(\hat{u}, \hat{\boldsymbol{z}})$ in \eqref{stbeqnll} for any $\bar{\boldsymbol{z}}\in \mathcal{X}$. 
	Next, we show that for any $\xi\in \{1, \dots, N\}$, $\varphi(\cdot, \cdot;\xi)$ is Lipschitz in $\mathcal{B}$.  Let $(u_1, \boldsymbol{z}_1), (u_2, \boldsymbol{z}_2) \in \mathcal{B}$. Then,  there exists a constant $C$ depending on $\beta, r, \lambda, L$, and ${U}_f$,  such that 
	\begin{align}
		&|\varphi(u_1, \boldsymbol{z}_1; \xi) - \varphi(u_2, \boldsymbol{z}_2; \xi)|\nonumber\\
		\leq&  \frac{\lambda}{2}\|u_1 + u_2\|_{\mathcal{U}}\|u_1 - u_2\|_{\mathcal{U}}+ \frac{1}{2}\big|f_{\xi}(\boldsymbol{z}_{1,\xi})+f_{\xi}(\boldsymbol{z}_{2,\xi})-2y_{\xi}\big| |f_{\xi}(\boldsymbol{z}_{1,\xi}) - f_{\xi}(\boldsymbol{z}_{2, \xi})|\nonumber 
		\\
		&+ 
		\frac{\beta}{2}\bigg|[\boldsymbol{\phi}_{\xi}, u_1] - \boldsymbol{z}_{1, \xi}+[\boldsymbol{\phi}_{\xi}, u_2]-\boldsymbol{z}_{2,\xi} \bigg|\bigg(\big|[\boldsymbol{\phi}_{\xi}, u_1]-[\boldsymbol{\phi}_{\xi}, u_2] \big| + \big| \boldsymbol{z}_{1,\xi}-\boldsymbol{z}_{2,\xi} \big| \bigg)\nonumber\\
		\leq & C(\|u_1 - u_2\|_{\mathcal{U}} + |\boldsymbol{z}_1 - \boldsymbol{z}_2|), \label{vplinnls}
	\end{align} 
	where \eqref{vplinnls} follows by the definition of $\mathcal{B}$ and Assumptions \ref{hypnlphibd} and \ref{hypf}. 
	
	By Jensen's inequality, we obtain 
	\begin{align}
		\label{stbcmp}
		|E_{\mathcal{I}, \mathcal{I}', i}[\varphi(\hat{u}, \hat{\boldsymbol{z}};\xi_i') - \varphi(\hat{u}_{(i)}, \hat{\boldsymbol{z}}_{(i)}; \xi_i')]|
		\leq & \frac{1}{M}\sum_{i=1}^ME_{\mathcal{I}, \xi_i'}\bigg|\varphi(\hat{u}, \hat{\boldsymbol{z}};\xi_i')-\varphi(\hat{u}_{(i)}, \hat{\boldsymbol{z}}_{(i)}; \xi_i')\bigg|\nonumber\\
		\leq&  \frac{C}{M}\sum_{i=1}^ME_{\mathcal{I}, \xi_i'}(\|\hat{u}-\hat{u}_{(i)}\|_{\mathcal{U}} + |\hat{\boldsymbol{z}}-\hat{\boldsymbol{z}}_{(i)}| ),
	\end{align}
	where \eqref{stbcmp} follows by \eqref{vplinnls}. Using \eqref{stbeqnll}, Corollary \ref{corovarphicv}, and Lemma \ref{wc3plemma}, we get
	\begin{align}
		&\varphi(\hat{u}, \hat{\boldsymbol{z}}; {\mathcal{I}}) + \frac{\gamma}{2}|\hat{\boldsymbol{z}}-\bar{\boldsymbol{z}}|^2\nonumber\\
		\leq& \varphi(\hat{u}_{(i)}, \hat{\boldsymbol{z}}_{(i)}; {\mathcal{I}}) + \frac{\gamma}{2}|\hat{\boldsymbol{z}}_{(i)}-\bar{\boldsymbol{z}}|^2 - \frac{\gamma-\mu}{2}|\hat{\boldsymbol{z}}-\hat{\boldsymbol{z}}_{(i)}|^2-\frac{\lambda}{2}\|\hat{u}-\hat{u}_{(i)}\|_{\mathcal{U}}^2, \label{vpnluhuhi:eq0}\\
		&\varphi(\hat{u}_{(i)}, \hat{\boldsymbol{z}}_{(i)}; {\mathcal{I}}_{(i)}) + \frac{\gamma}{2}|\hat{\boldsymbol{z}}_{(i)}-\bar{\boldsymbol{z}}|^2\nonumber\\
		\leq& \varphi(\hat{u}, \hat{\boldsymbol{z}}; {\mathcal{I}}_{(i)}) + \frac{\gamma}{2}|\hat{\boldsymbol{z}}-\bar{\boldsymbol{z}}|^2-\frac{\gamma-\mu}{2}|\hat{\boldsymbol{z}}-\hat{\boldsymbol{z}}_{(i)}|^2-\frac{\lambda}{2}\|\hat{u}-\hat{u}_{(i)}\|_{\mathcal{U}}^2. \label{vpnluhuhi:eq1}
	\end{align}
	Adding \eqref{vpnluhuhi:eq0} and \eqref{vpnluhuhi:eq1} together and using \eqref{vplinnls}, we obtain 
	\begin{align}
		&\lambda\|\hat{u}-\hat{u}_{(i)}\|^2_{\mathcal{U}}+(\gamma-\mu)|\hat{\boldsymbol{z}}-\hat{\boldsymbol{z}}_{(i)}|^2\nonumber\\
		\leq& \varphi(\hat{u}_{(i)}, \hat{\boldsymbol{z}}_{(i)}; {\mathcal{I}}) - \varphi(\hat{u}_{(i)}, \hat{\boldsymbol{z}}_{(i)}; {\mathcal{I}}_{(i)}) + \varphi(\hat{u}, \hat{\boldsymbol{z}}; {\mathcal{I}}_{(i)}) - \varphi(\hat{u}, \hat{\boldsymbol{z}}; {\mathcal{I}})\nonumber\\
		= & \frac{1}{M}[\varphi(\hat{u}_{(i)}, \hat{\boldsymbol{z}}_{(i)}; \xi_i) - \varphi(\hat{u}_{(i)}, \hat{\boldsymbol{z}}_{(i)}; \xi_i') + \varphi(\hat{u}, \hat{\boldsymbol{z}}; \xi_i') - \varphi(\hat{u}, \hat{\boldsymbol{z}}; \xi_i)]\nonumber\\
		\leq & \frac{C}{M}(\|\hat{u}- \hat{u}_{(i)}\|_{\mathcal{U}} + |\hat{\boldsymbol{z}}-\hat{\boldsymbol{z}}_{(i)}|),\nonumber
	\end{align}
	which together with the fact that $\forall a, b \in \mathbb{R}, (a + b)^2 \leq 2a^2 + 2b^2$,  yield
	\begin{align}
		\label{bduuizzinl}
		\|\hat{u}-\hat{u}_{(i)}\|_{\mathcal{U}}+|\hat{\boldsymbol{z}}-\hat{\boldsymbol{z}}_{(i)}| \leq \frac{C}{M\min\{\lambda, \gamma-\mu\}}.
	\end{align}
	Hence,  \eqref{stbcmp} and \eqref{bduuizzinl} imply that $\widetilde{\operatorname{prox}}_{\varphi/\gamma}^{\mathcal{I}}$ is $\epsilon$-stable with $\epsilon$ given in \eqref{nonepsdef}. 
\end{proof}

\begin{proof}[Proof of Proposition \ref{pronlmid}]
	Let $\mathcal{I}_k$ be a mini-batch of indices in the $k^{\textit{th}}$ step of Algorithm \ref{alg:1}. 
	According to Corollary \ref{corovarphicv}, $(u, \boldsymbol{z})\mapsto\varphi(u, \boldsymbol{z}; \mathcal{I}_k)$ is $\mu$-weakly convex in $\boldsymbol{z}$ and strongly convex in $u$. Then, by Lemma \ref{wc3plemma} and \eqref{alg:1:upts}, for any $u\in \mathcal{U}$ and $\boldsymbol{z}\in \mathcal{X}$, we have  
	\begin{align}
		\frac{\gamma_k - \mu}{2}|\boldsymbol{z} - \boldsymbol{z}^{k+1}|^2 \leq 
	& \frac{\gamma_k}{2}|\boldsymbol{z} - \boldsymbol{z}^{k}|^2 - \frac{\gamma_k}{2}|\boldsymbol{z}^{k+1} - \boldsymbol{z}^k|^2 +  \varphi(u, \boldsymbol{z}; {\mathcal{I}}_k) - \varphi(u^{k+1}, \boldsymbol{z}^{k+1})\nonumber \\
		&+ \varphi(u^{k+1}, \boldsymbol{z}^{k+1}) -  \varphi(u^{k+1}, \boldsymbol{z}^{k+1}; {\mathcal{I}}_k). \label{nlzine}
	\end{align}
	Lemmas \ref{mmberdb} and \ref{nlstles} imply that
	\begin{align}
		\label{stb}
		E_k[\varphi(u^{k+1}, \boldsymbol{z}^{k+1}) -  \varphi(u^{k+1}, \boldsymbol{z}^{k+1}; {\mathcal{I}}_k)] \leq \epsilon_k, 
	\end{align}
	where $\epsilon_k$ is given in \eqref{nonepsdef} with $\gamma:=\gamma_k$. 
	For $\rho>0$,  let 	$(\hat{u}^k, \hat{\boldsymbol{z}}^k)$ be given in \eqref{nluhzh}. 
	Then, we have
	\begin{align}
		\label{vpuhzhnlmuk1}
		\varphi(\hat{u}^k, \hat{\boldsymbol{z}}^k) - \varphi(u^{k+1}, \boldsymbol{z}^{k+1}) \leq - \frac{\rho}{2}|\hat{\boldsymbol{z}}^k - \boldsymbol{z}^k|^2 + \frac{\rho}{2}|\boldsymbol{z}^{k+1} - \boldsymbol{z}^k|^2. 
	\end{align}
	Plugging $(u, \boldsymbol{z}):=(\hat{u}^k, \hat{\boldsymbol{z}}^k)$ into \eqref{nlzine}, taking the expectation, using \eqref{stb} and \eqref{vpuhzhnlmuk1}, and using Lemma \ref{nlstles}, we obtain
	\begin{align}
		\label{nldccs}
		\frac{\gamma_k-\mu}{2} E_k[|\hat{\boldsymbol{z}}^k - \boldsymbol{z}^{k+1}|^2] \leq \frac{\gamma_k-\rho}{2}|\hat{\boldsymbol{z}}^k - \boldsymbol{z}^k|^2 - \frac{\gamma_k-\rho}{2}E_k[|\boldsymbol{z}^{k+1} - \boldsymbol{z}^k|^2] + \epsilon_k. 
	\end{align}
	For $\rho>0$, using the definition of $\varphi_{1/\rho}$ and \eqref{nldccs}, we have
	\begin{align}
		&E_k[\varphi_{1/\rho}(\boldsymbol{z}^{k+1})] 
		\leq  E_k[\varphi(\hat{u}^k, \hat{\boldsymbol{z}}^k) + \frac{\rho}{2}|\hat{\boldsymbol{z}}^k - \boldsymbol{z}^{k+1}|^2]\nonumber\\
		\leq & \varphi_{1/\rho}(\hat{\boldsymbol{z}}^k) - \frac{\rho(\rho-\mu)}{2(\gamma_k - \mu)}|\hat{\boldsymbol{z}}^k - \boldsymbol{z}^k|^2 - \frac{\rho(\gamma_k - \rho)}{2(\gamma_k - \mu)}E_k[|\boldsymbol{z}^{k+1} - \boldsymbol{z}^k|^2] + \frac{\rho\epsilon_k}{\gamma_k - \mu},\nonumber
	\end{align}
	which concludes \eqref{nllteq} after rearrangement. 
\end{proof}

\end{document}